\documentclass[a4paper,12pt]{amsart}
\makeatletter
\setlength{\textheight}{20cm} \textwidth16cm \hoffset=-2truecm
\@namedef{subjclassname@2020}{%
\textup{2020} Mathematics Subject Classification}
\makeatother
\usepackage[shortlabels]{enumitem}
\newtheorem{thm}{Theorem}[section]

\newtheorem{prop}[thm]{Proposition}
\newtheorem{proposition}[thm]{Proposition}
\newtheorem{cor}[thm]{Corollary}
\newtheorem{corollary}[thm]{Corollary}

\newtheorem{lemma}[thm]{Lemma}
\newcommand{\eps}{\varepsilon}

\renewcommand{\Im}{\operatorname{\rm{Im}}}
\renewcommand{\Re}{\operatorname{\rm{Re}}}

\def\C{\mathbb C}
\def\D{\mathbb D}
\def\N{\mathbb N}
\def\R{\mathbb R}

\def\ds{\displaystyle}
\def\k{\kappa}

\def\e{\varepsilon}
\def\d{\delta}

\def\z{\tilde z}
\def\w{\tilde w}

\newenvironment{proof*}{\vskip 2mm\noindent {}}{\hfill $\Box$ \vskip 2mm}

\begin{document}

\numberwithin{equation}{section}

\title[Quantitative localization and comparison]
{Quantitative localization and comparison of invariant distances of domains in $\mathbb C^n$}

\author[N. Nikolov]{Nikolai Nikolov}
\author[P.J. Thomas]{Pascal J. Thomas}

\address{N. Nikolov:
Institute of Mathematics and Informatics\\
Bulgarian Academy of Sciences\\
Acad. G. Bonchev Str., Block 8\\
1113 Sofia, Bulgaria\\ \smallskip\newline
Faculty of Information Sciences\\
State University of Library Studies
and Information Technologies\\
69A, Shipchenski prohod Str.\\
1574 Sofia, Bulgaria}
\email{nik@math.bas.bg}

\address{P.J.~Thomas:
Institut de Math\'ematiques de Toulouse; UMR5219 \\
Universit\'e de Toulouse; CNRS \\
UPS, F-31062 Toulouse Cedex 9, France} \email{pascal.thomas@math.univ-toulouse.fr}

\begin{abstract}
We obtain  explicit bounds on the difference
between ``local'' and ``global'' Kobayashi distances in a domain of $\mathbb C^n$
 as the points go toward a boundary point with appropriate
geometric properties. We use this for the global comparison
of various invariant distances.  We provide some sharp estimates
in dimension $1$.
\end{abstract}

\thanks{The first named author is partially supported by the National Science Fund,
Bulgaria under contract KP-06-N52/3.}
\subjclass[2020]{32F45}

\keywords{(pseudo)convex domain,
Lempert function, Kobayashi, Bergman and Carath\'eodory distances}

\maketitle

\section{Introduction}

\subsection{Previous results and motivations.}

In this paper, we want to study the behaviour of various invariant (pseudo)distances and metrics -- notably
the Kobayashi-Royden infinitesimal pseudometric -- as the points considered tend to the same
point on the Euclidean boundary of a domain in $\mathbb C^n$.  It is often useful or even necessary
to compare those metrics and distances to their counterparts defined
within a neighborhood of a boundary  point,
i.e. to get ``localization'' estimates of the kind already present in the seminal work of
Forstneri\v c and Rosay \cite{FR}.

For that purpose it will be necessary to see that, under some hypotheses of local
convexity, the whole of a geodesic (or
more generally of a curve close to the solution of the extremal problem)
has to tend to the boundary. Proposition
\ref{locgeod} makes this precise.  
This can be seen as a sort of converse to the ``visibility'' results; recall that
in a geodesic metric space, ``visibility of geodesics'' roughly means that when two points tend
to distinct boundary points (which may lie in some abstract boundary),
the geodesic curves connecting them have to pass through
a fixed compact set depending only on the limits.

Regarding the behavior of extremal curves, F. Bracci, J.E. Fornaess and E.F. Wold prove
that in a strongly pseudoconvex domain $D$, two points sufficiently close to the boundary
and whose difference is almost tangential to it can be joined by a
complex geodesic of $D$ which is also a holomorphic retract of $D$ (and remains close the points, see \cite[Proposition 2.5]{BFW}).
As a consequence, for those points, the Carath\'eodory distance coincides with the Lempert function (and
therefore with the Kobayashi distance).

Boundedness of differences between local and global Kobayashi distances (themselves tending to infinity)
was recently obtained under various hypotheses in \cite{LW} and \cite{BNT}. On the other hand,
several other invariant distances can be considered, as in \cite[Propositions 1 and 2]{N2}, which themselves make use of the precise
estimates obtained in \cite{BB} for strictly pseudoconvex domains.
A.  Zimmer \cite[Theorem 16.3]{Zim} obtained localization results for locally convexifiable
domains of finite type.
Much earlier, Venturini \cite[Theorem 1, Proposition 3]{Ven} considered the ratios of various invariant distances.
A motivation of the present paper is to obtain quantitative versions of the results
mentioned above from \cite{Ven}, \cite{N2} and \cite{Zim}.

Our main localization result is Theorem \ref{kobloc}, which gives an explicit bound on the difference
between local and global Kobayashi distances as the points go toward a boundary point with the required
geometric properties.  Theorem \ref{finsloc} is the general technical tool needed to obtain this localization result
from previous localization results on the infinitesimal level.

Another motivation is to obtain more precise estimates
about the Bergman metric and the Lempert function than those obtained in \cite{N2}.
The Bergman metric has a definition of a slightly different character, coming
from extremal problems on spaces of square-integrable holomorphic functions. But its values are  constrained
by the ``squeezing function'' (details and references given below), so we can deduce similar results about it,
see Theorem \ref{berg}, which is related to \cite[Proposition 2]{N2}.

Our second main result is Theorem \ref{lempcara} which provides  quantitative versions
of \cite[Theorem 1]{Ven} (for ratios of invariant distances, in the essential case
where  both points approach the same boundary point) and
of \cite[Proposition 1]{N2} (for differences). Note that
Theorems \ref{lemp} and \ref{lempcara} involve the Carath\'eodory distance and
the Lempert function, which are more delicate to manipulate since they are not inner distances.

Explicit calculations lead to a sharp localization result in dimension $1$, Theorem \ref{dimone}.
The question remains open of which is the correct exponent in the remainder term in higher dimension, at least for
strictly pseudoconvex domains.
\vskip.3cm
\noindent
{\bf Acknowledgement.} We would like to thank the referee for his very careful reading of the paper,
and his detailed suggestions which helped us correct some inaccuracies and improve the exposition.

\vskip.3cm

\subsection{Some definitions and notations.}

We denote by $\D$ the unit disc in $\C$.
Let $D$ be a domain in $\C^n.$ We denote by $\delta_D(z)$ the Euclidean distance from $z\in D$ to
$\C^n \setminus D$.

Let $x, y, z\in D$ and $X\in\C^n.$ The Kobayashi-Royden (pseudo)metric $\k_D$
and the Kobayashi (pseudo)distance $k_D$ of $D$ are defined as:
\begin{equation*}
\label{defgeod}
\k_D(z;X)=\inf\{|\alpha|:\exists\varphi\in\mathcal O(\D,D):
\varphi(0)=z,\alpha\varphi'(0)=X\},
\end{equation*}
\begin{equation}
\label{defkoba}
k_D(x,y)=\inf_\sigma\int_a^b\k_D(\sigma(t);\sigma'(t))dt,
\end{equation}
where the infimum is taken over all absolutely continuous curves $\sigma:[a,b]\to D$ with $a,b \in \R$,
$a<b$, $\sigma(a)=x$ and $\sigma(b)=y.$
\vskip.3cm
We will use the following notation: for any sets $D$ and $U$ in $\mathbb C^n,$ we write $D_U:=D\cap U.$
This will be used mostly for $D$ a domain in $\C^n$ and $U$ a neighborhood of $p \in \partial D$.

Recall that a domain $D$ is \emph{$\C$-convex} if any non-empty intersection of $D$ with a complex line
is connected and simply connected. For the relationship of this notion with (weak) lineal convexity, see
\cite{APS} or \cite{Hor}. Of course, any convex domain is $\C$-convex.

We say that a domain $D$ is \emph{convexifiable} (resp. $\C$-convexifiable) near a point $p \in \partial D$ if there exists
a neighborhood $U$ of $p$ and a biholomorphic map $\Phi$ defined in a neighborhood of $ \overline{D_U}$ such that
$\Phi(D_U)$ is convex (resp. $\C$-convex). If $\partial D$ is of class $\mathcal C^2$ near $p$,
a point of strict pseudoconvexity, then $D$ is
convexifiable near $p$.

We recall that when $D \subset \C^n$ is a domain with $\mathcal C^\infty$-smooth boundary near  a point $p\in \partial D$,
then $p$ is said to be of  \emph{type at most $m$} if any analytic curve  has an order of contact at most $m$ with
$\partial D$ at $p.$
When $D$ is $\C$-convex,
it is enough to consider the order of contact with affine complex lines
passing through $p$ (see \cite[Proposition 6]{NPZ}).
So when $D$ is $\C$-convex, we say that $p\in \partial D$ is of type at most $m$ when $\partial D$ is $\mathcal C^{m}$-smooth near $p$  ($m\in\N$) and the order of contact of complex lines with $\partial D$ is
at most $m$ in a neighborhood of $p$. If $D$ is $\C$-convexifiable near a point $p \in \partial D$,
and $\partial D$ is $\mathcal C^{m}$-smooth near $p$, we say that $p$ is of type at most $m$
if $\Phi(p)$ is with respect to $\Phi(D_U)$.

\subsection{Results.}

\begin{thm}
\label{kobloc}
Let $D \subset \C^n$ be a domain. Assume that $D$ is $\C$-convexifiable near $p \in \partial D$,
and that $p$ is of type at most $m.$
Then there exists a neighborhood $U_0$ of $p$ such that for any neighborhoods
$V\subset \subset U\subset U_0$, with $D_U$ connected,
 one may find $C>0$  such that
\begin{equation}
\label{kobloceq}
k_D(z,w) \le k_{D_{U}}(z,w) \le k_D(z,w) +C|z-w|^{1/m},\quad z,w\in D_V, \qquad \mbox{and}
\end{equation}
\begin{equation}
\label{kobrat}
1\le  \frac{k_{D_U}(z,w)}{k_D(z,w)} \le 1 +C'( \delta_D(z)+|z-w|^{1/m}),\quad z\neq w\in D_V.
\end{equation}
\end{thm}

This theorem follows quickly from the more general and technical Theorem \ref{finsloc}, see subsection \ref{pfcor}.

\vskip.3cm
\noindent
{\bf Remark.} In particular, if $p \in \partial D$ is a strictly pseudoconvex point,
then local convexifiability follows and the conclusions \eqref{kobloceq} and \eqref{kobrat}
hold with $m=2$.

The statement \eqref{kobrat} for $m=2$ provides a quantitative refinement of the results
about the behavior of the ratios of various distances
 in \cite[Proposition 3]{Ven} and \cite[Proposition 4(5)]{N2}.
The statement \eqref{kobloceq} somewhat generalizes and sharpens  \cite[Theorem 16.3]{Zim}.
\vskip.3cm

We need a couple of definitions to state the next result.
Let us say that $f$ is an \emph{admissible weight} if $f:(0,\infty)\to(0,\infty)$ is a
continuous, increasing function
such that $x\mapsto \frac{f(x)}x$ is decreasing and $\int_0^1 \frac{f(x)}x dx < \infty$.
Without loss of generality, we may assume that $\lim_{x\to\infty} f(x)=\infty$.

Set  $\omega_f(s):=\int_0^s \frac{f(x)}x dx.$

We let $t_D:D\times\C^n\to[0,\infty]$ be an upper semicontinuous Finsler pseudometric, and always denote by
$T_D$  the pseudodistance obtained from integrating $t_D.$

\begin{thm}
\label{finsloc}

Let $D \subset \C^n$ be a domain. Assume that $D$ is $\C$-convexifiable near $p \in \partial D$,
and that $p$ is of type at most $m.$


Then there exists $U_0$ a neighborhood of $p$ with the following properties.

For any bounded neighborhood $U$ of $p$ such that $U\subset U_0$ and $D_U$ is connected,
 any $f$ admissible weight and any $t_D$ (u.s.c.) Finsler pseudometric such that
\begin{equation}
\label{metricupest}
t_D (z;X)\le\left( 1+ f\left(\delta_D(z)\right) \right) \kappa_D (z;X), z \in D_U,
\end{equation}
then for any neighborhood
$V\subset \subset U$ of $p$ one may find $C>0$  such that
\begin{equation}
\label{distupest}
T_D (z;w) \le k_D (z;w) + C \omega_f( |z-w|^{1/m}) ,\quad z,w\in D_V, 
\qquad \mbox{and}
\end{equation}
\begin{equation}
\label{uprat}
 \frac{T_D(z,w)}{k_D(z,w)} \le 1+ C f\left(\delta_D(z)+ |z-w|^{1/m}\right),\quad z\neq w\in D_V.
\end{equation}

Furthermore, if instead of \eqref{metricupest}, we assume
\begin{equation}
\label{metriclowest}
t_D (z;X) \ge \left( 1+ f(\delta_D(z)) \right)^{-1} \kappa_D (z;X),  z \in D_U,
\end{equation}
then for any neighborhood
$V\subset \subset U$ of $p$ one may find $C>0$  such that
\begin{equation}
\label{distlowest}
T_D (z;w) \ge k_D (z;w) - C  \omega_f( |z-w|^{1/m}),\quad z,w\in D_V, \qquad \mbox{and}
\end{equation}
\begin{equation}
\label{lowrat}
 \frac{k_D(z,w)}{T_D(z,w)} \le 1+ C f\left( \delta_D(z)+ |z-w|^{1/m}\right),\quad z\neq w\in D_V.
\end{equation}
\end{thm}

We provide the proof of Theorem \ref{finsloc} in Section \ref{pf2}.

\vskip.3cm

\noindent
{\bf Remark.} In this theorem, and in all the results that follow, the
expression $|z-w|^{1/m}$ in the bounds
could be replaced by a somewhat better one, $B_D(z,w)$, defined by
\begin{equation}
\label{betterb}
B_D(z,w) :=  \left( \frac{|z-w|}{|z-w|^{1/2}+ \delta_D(z)^{1/2} +\delta_D(w)^{1/2} }\right)^{2/m}.
\end{equation}
We indicate how to modify the proof of Proposition  \ref{locgeod} at the end of Subsection \ref{notfar};
the reader can verify that Lemma \ref{expdec}, and subsequent steps of the proof, with suitably modified
statements, can be proved in the same way.

\vskip.3cm

For a bounded domain $D \subset \C^n$, we denote by $L_h^2(D)$ the space of holomorphic
functions which are square-integrable with respect to the usual Lebesgue measure.

We define the Bergman metric by
$$
\beta_D(z;X)=\frac{M_D(z;X)}{K_D(z)}, \quad z\in D,\  X\in\C^n,
$$
where
$$
M_D(z;X)=\sup\{|f'(z)X|:f\in L_h^2(D),\;||f||_D\le 1,\;f(z)=0\},
$$
and
$$
K_D(z)=\sup\{|f(z)|:f\in L_h^2(D),\;||f||_D\le 1\}
$$
is the square root of the Bergman kernel on the diagonal.

Furthermore, it will be convenient to have a different normalization of the Bergman metric
and to set
$$\tilde \beta_D(z;X) := \beta_D(z;X)/ \sqrt{n+1}.$$
We then define $b_D$ to be the inner distance obtained by integrating $\tilde \beta_D$.

\begin{thm}
\label{berg}
Let $D \subset \C^n$ be a bounded pseudoconvex domain. We denote by $u,v$ any two of the four pseudodistances
$ k_D, k_{D_U}, b_{D},b_{D_U}$.

Let $p\in \partial D$ be a strictly pseudoconvex point;
assume that $\partial D$ is $\mathcal C^{k,\eps}$-smooth in a neighborhood
of $p$, with $k= 2$ or $3$, and
$\eps\in (0,1]$ if $k=2$,  $\eps\in [0,1)$ if $k=3$.

Then there exists $U_0$
a neighborhood of $p$ such that for any open neighborhood of $p$, $U\subset U_0$,
with $D_U$ connected, and for any
neighborhood $V$ of $p$ such that $ V \subset \subset U$,
 there exists a constant $C>0$ such that for any $z\neq w \in D_V$
\begin{equation}
\label{kobberg}
| u(z,w)-v(z,w)| \le C|z-w|^{(k-2+\eps)/4}\mbox{, resp. }
\left| \frac{u(z,w)}{v(z,w)} -1\right| \le C\left( \delta_D(z)+|z-w|^{1/2}\right)^{(k-2+\eps)/2}.
\end{equation}

\end{thm}
The proof is given in Section \ref{lempert}.


\begin{cor}
\label{cor14}
Let $D$ be a strictly pseudoconvex domain
with $\mathcal C^{k,\eps}$-smooth boundary, with $k= 2$ or $3$, and
$\eps\in (0,1]$. Then there is a constant $C>0$ such that for any $z\neq w \in D$,
\begin{multline*}
| k_D(z,w)-b_D(z,w)| \le C|z-w|^{(k-2+\eps)/4},
\\
\left| \frac{k_D(z,w)}{b_D(z,w)} -1\right| \le C\left( \delta_D(z)+|z-w|^{1/2}\right)^{(k-2+\eps)/2}.
\end{multline*}
\end{cor}

A non-quantitative version of this statement for the $\mathcal C^2$-smooth case was given in \cite[Proposition 4(6)]{N2}.
\vskip.3cm

To compare various invariant distances, we will use the following (unbounded) version of the Lempert function, i.e.
\[
l_D(z,w):=\inf\{ \tanh^{-1}\alpha : \alpha\in[0,1) \mbox{ and }\exists\varphi\in\mathcal O(\mathbb
D,D):\varphi(0)=z,\varphi(\alpha)=w\}.
\]
Note that $k_D$ is the largest pseudodistance on $D$ which does not exceed $l_D$.

Similarly, let also $c_D$ be the (unbounded) Carath\'eodory distance,
\[
c_D(z,w):=\sup\{ \tanh^{-1}\alpha : \alpha\in[0,1) \mbox{ and }\exists f\in\mathcal O(D,\mathbb
D):f(z)=0,f(w)=\alpha\}.
\]

We always have $c_D \le k_D\le l_D$. We know that $l_D-c_D$ is bounded
on strictly pseudoconvex domains \cite[Proposition 1]{N2}.
We can refine this estimate in two directions: first, replacing the constant bound by a
quantity that depends on $|z-w|$, and second, for $l_D-k_D$, to have a result covering the case
of domains of type $m$.

\begin{thm}
\label{lemp}
Let $D \subset \C^n$ be a bounded domain with $\mathcal C^{m}$-smooth boundary.
Assume that $D$ is $\C$-convexifiable near each $p \in \partial D$,
and that any $p \in \partial D$ is of type at most $m.$

Then there exists $C>0$ such that
\begin{equation}
\label{lkvd}
0\le l_D(z,w) - k_D(z,w) \le C |z-w|^{1/m},\quad z,w\in D,  \qquad \mbox{and}
\end{equation}
\begin{equation}
\label{lkrat}
1\le \frac{l_D(z,w)}{k_D(z,w)} \le 1+ C \left( \delta_D(z)+|z-w|^{1/m}\right),\quad z\neq w\in D.
\end{equation}
\end{thm}

Comparing with the Carath\'eodory distance seems to require strict pseudoconvexity.

\begin{thm}
\label{lempcara}
If $D \subset \C^n$ is a strictly pseudoconvex domain, then
there exists $C>0$ such that
\begin{equation}
\label{lcvd}
0\le l_D(z,w) - c_D(z,w) \le C |z-w|^{1/2},\quad z,w\in D  \qquad \mbox{and}
\end{equation}
\begin{equation}
\label{lcrat}
1\le \frac{l_D(z,w)}{c_D(z,w)} \le 1+ C \left( \delta_D(z)+|z-w|^{1/2}\right),\quad z\neq w\in D.
\end{equation}
\end{thm}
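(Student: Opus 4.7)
The lower bounds $l_D \ge c_D$ are immediate from $c_D \le k_D \le l_D$. For the upper bounds, the plan is to first reduce to a local situation near a boundary point, and then establish a quantitative Carath\'eodory localization via peak functions and $\bar\partial$-estimates.

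By \cite[Proposition 1]{N2}, $l_D - c_D$ is globally bounded on the strictly pseudoconvex domain $D$, so \eqref{lcvd} is automatic whenever $|z-w|$ is bounded below. When $z, w$ lie in a fixed compact subset $K \subset D$, both $l_D(z,w)$ and $c_D(z,w)$ are comparable to $|z-w|$ via uniform bounds on the infinitesimal Carath\'eodory and Kobayashi metrics on $K$, so $l_D - c_D = O(|z-w|)$. A compactness argument on $\overline D$ then reduces \eqref{lcvd} to the case where $z, w$ both lie in a small neighborhood $V$ of some point $p \in \partial D$.

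Since $p$ is a strictly pseudoconvex point with $\mathcal{C}^2$ boundary, one can choose $V \subset\subset U$ and a biholomorphism $\Phi$ defined in a neighborhood of $\overline{D_U}$ such that $\Phi(D_U)$ is a bounded, strictly convex domain. By Lempert's theorem and biholomorphism invariance, $l_{D_U} = c_{D_U} = k_{D_U}$ on $D_U$. Combined with the monotonicity $l_D \le l_{D_U}$, \eqref{lcvd} now reduces to the quantitative Carath\'eodory localization
\[
c_{D_U}(z,w) \le c_D(z,w) + C|z-w|^{1/2}, \qquad z,w \in V.
\]
I expect this to be the main obstacle: unlike $k_D$ and $l_D$, the Carath\'eodory distance is not an integrated Finsler distance, so Theorem \ref{finsloc} does not apply directly. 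To prove it, I would use a peak-function plus $\bar\partial$-correction argument, made quantitative. Take a near-extremal $f \in \mathcal{O}(D_U, \mathbb{D})$ for $c_{D_U}(z,w)$ with $f(z) = 0$; let $\psi \in \mathcal{O}(D) \cap \mathcal{C}(\overline D)$ be a holomorphic peak function of $D$ at an appropriate boundary point $p_0$ close to $z$ and $w$, satisfying $|\psi(\zeta)| \le 1 - c(|\zeta - p_0|^2 + \delta_D(\zeta))$ near $p_0$ (available for strictly pseudoconvex domains with $\mathcal{C}^2$ boundary via the Levi polynomial); and let $\chi \in \mathcal{C}_c^\infty(U)$ with $\chi \equiv 1$ on $V$. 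Solve $\bar\partial u_N = (\bar\partial \chi) f \psi^N$ on $D$ with the $L^\infty$-bound $\|u_N\|_\infty \le C q^N$ (for some $q \in (0,1)$) using the bounded Henkin--Leiterer solution operator on the strictly pseudoconvex domain $D$, together with the fact that $\operatorname{supp}(\bar\partial \chi) \subset U \setminus V$ is bounded away from $p_0$. Then $\tilde g := (\chi f \psi^N - u_N)/(1 + \|u_N\|_\infty) \in \mathcal{O}(D, \mathbb{D})$ satisfies $|\tilde g(\zeta) - f(\zeta)| \le C(N|\zeta - p_0|^2 + q^N)$ for $\zeta \in V$. A suitable choice of $p_0$ and $N$ depending on $(z,w)$, together with a separate treatment of the interior regime $\delta_D(z) \ge c|z-w|^{1/2}$ (where the infinitesimal comparison already gives $l_D - c_D = O(|z-w|)$), yields the $|z-w|^{1/2}$ rate at $z$ and $w$. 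Since $f(z) = 0$, the Schwarz--Pick inequality translates this pointwise approximation into the claimed bound on $c_{D_U}(z,w) - c_D(z,w)$.

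For \eqref{lcrat}, I would upgrade the localization to a ratio form $c_{D_U}/c_D \le 1 + C(\delta_D + |z-w|^{1/2})$, either by a direct refinement of the peak-function argument or by combining \eqref{lcvd} with standard lower bounds on $c_D$ of Balogh--Bonk type near strictly pseudoconvex boundary points, in the same way that Theorem \ref{finsloc} converts difference bounds into ratio bounds. In the interior regime, where $\delta_D$ is bounded below, the ratio bound reduces to the comparability of $l_D$ and $c_D$ on compact subsets of $D$, which is absorbed into the $1 + C\delta_D$ term.
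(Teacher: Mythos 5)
Your initial reduction (global boundedness from \cite[Proposition~1]{N2}, disposal of the compact-inside case, reduction to $z,w$ near a common boundary point $p$, local convexification and Lempert giving $l_{D_U}=c_{D_U}=k_{D_U}$, hence $l_D-c_D\le c_{D_U}-c_D$) matches the spirit of the paper's proof-by-contradiction setup. The divergence, and the gap, is in how you propose to bound $c_{D_U}(z,w)-c_D(z,w)$.

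\textbf{The gap.} You want a quantitative Carath\'eodory localization via peak functions and $\bar\partial$-correction, producing $\tilde g\in\mathcal O(D,\D)$ with $\sup_V|\tilde g-f|\le\eps$, where $f$ is near-extremal for $c_{D_U}$ with $f(z)=0$, $f(w)=\alpha$. The problem is converting a sup-norm perturbation into a bound on the distance difference. Since $c$ is defined through $\tanh^{-1}$, a pointwise error $\eps$ changes $c_D(z,w)$ by roughly $\eps/(1-\alpha^2)$. Hence to obtain $c_{D_U}-c_D\lesssim|z-w|^{1/2}$ you need $\eps\lesssim(1-\alpha^2)\,|z-w|^{1/2}$, and since $1-\alpha^2\asymp e^{-2c_{D_U}(z,w)}\gtrsim\delta_D(z)\delta_D(w)/|z-w|^2$ when the points are near $\partial D$, this means $\eps\lesssim \delta_D(z)\delta_D(w)\,|z-w|^{-3/2}$. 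In the tangential regime $\delta_D(z)\sim\delta_D(w)\sim\delta\ll|z-w|$, this is far smaller than anything your construction produces: taking $p_0$ on $\partial D$ nearest to $z$, the Levi-polynomial peak function satisfies $|1-\psi(\zeta)|\gtrsim|\zeta-p_0|$ (the linear term in the complex normal direction does not vanish), so the error at $w$ is already $\gtrsim N|z-w|$; after optimizing $N$ against the $q^N$ term, the best you get is $\sim|z-w|\log(1/|z-w|)$. This is nowhere near $\delta^2/|z-w|^{3/2}$ once $\delta\ll|z-w|^{5/4}$, a regime that certainly occurs (e.g.\ two points at equal small depth $\delta$ with tangential separation $\sim\delta^{1/2}$). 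Your ``separate treatment of the interior regime $\delta_D(z)\ge c|z-w|^{1/2}$'' does not cover this; the failure is precisely in the opposite (near-boundary, tangentially separated) regime. The same issue affects \eqref{lcrat}, where the normalization by $c_D$ does not rescue a polynomial-in-$\delta$ error in the test function.

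\textbf{What the paper does instead.} Rather than proving any Carath\'eodory localization, the paper uses Fornaess' embedding theorem \cite{For} to obtain a holomorphic map $\Phi$ of a neighborhood of $\overline D$ and a bounded strictly convex $G\supset\Phi(D)$ with $\partial G=\partial(\Phi(D))$ near $\Phi(p)$, $\Phi$ biholomorphic near $p$. The chain
\[
l_D-c_D\le l_{D_U}-c_D=l_{G'}-c_{\Phi(D)}\le l_{G'}-c_G=k_{G'}-k_G
\]
(where $G'=\Phi(D_U)$) uses only the trivial monotonicity $c_{\Phi(D)}\ge c_G$ from $\Phi(D)\subset G$ --- there is no error term at all, hence no issue with the $\tanh^{-1}$-sensitivity --- and then Lempert's theorem on both strictly convex domains $G$, $G'$ converts everything into the Kobayashi difference $k_{G'}-k_G$, which is controlled by the already-proved Theorem~\ref{kobloc}. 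The paper explicitly notes that this route also avoids the Balogh--Bonk estimates that you suggest falling back on for \eqref{lcrat}. You should adopt this Fornaess-embedding step; the $\bar\partial$-peak-function route, at least as sketched, does not reach the stated rate.
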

The two theorems above are proved in Section \ref{lempert}.

In the special case of dimension $1$, a much more precise estimate holds.

\begin{thm}
\label{dimone}
Let $p$ be a Dini-smooth boundary point
of a planar domain $D.$ Then there exists a neighborhood $U_0$ of $p$
such that for any neighborhoods $V\subset\subset U\subset U_0$ of $p$
one may find $C>0$ such that
$$0\le k_{D_U}(z,w)-c_D(z,w)\le C|z-w|(|z-w|+\d_D(z)^{1/2}\d_D(w)^{1/2}),\quad z,w\in D_V.$$
\end{thm}
The proof is to be found in Section \ref{plane}, as well as
examples showing that none of the terms in the parenthesis can be dispensed with.

\noindent{\bf Remarks.} a) Note that
$$\max\{k_D-c_D,c_{D_U}-c_D,k_{D_U}-k_D\}\le k_{D_U}-c_D,$$
so the three differences inside the maximum satisfy the estimate in Theorem \ref{dimone}.

b) Note that $k_{D_U}(z,w)-c_D(z,w)=|z-w|o(1)$ near $p$ but the exponent $1$
of $|z-w|$ cannot be improved. So $1$ is the right exponent in the plane.
We may conjecture the same in the strictly pseudoconvex case in $\mathbb C^n$
(our results only give 1/2).

c) Since $|z-w| \ge |\delta_D(z)- \delta_D(w)|$, if we allow a multiplicative constant, the second
term in the parenthesis could be $\max (\delta_D(z), \delta_D(w))$ or $(\delta_D(z)\delta_D(w))^{1/2}$
or in fact any number between $\delta_D(z)$ and $\delta_D(w)$.  This is true for all $n\ge 1$.

\section{Proofs of Theorem \ref{kobloc} and \ref{finsloc} }
\label{pf2}

We need to specify the class of curves that we will use instead of actual geodesics.
Let $t_D$ be a (u.s.c.) Finsler pseudometric and $T_D$ the associated integrated pseudodistance.

For any $\eps\ge 0$, we will call an absolutely continuous curve
$\sigma: [a;b] \rightarrow D$, 
with $\sigma(a)=z$, $\sigma(b)=w$,
 \emph{$\eps$-extremal} for $T_D$ if $\int_a^b t_D(\sigma(t);\sigma'(t))dt \le T_D(z,w) +\eps$.

 When $\sigma$ is an $\eps$-extremal curve for $T_D$, the definition of the pseudodistance and the triangle inequality
imply that for $a\le t_1<t_2\le b$,
\begin{equation}
\label{partlength}
T_D(\sigma(t_1), \sigma(t_2)) \le \int_{t_1}^{t_2} t_D(\sigma(t);\sigma'(t))dt \le T_D(\sigma(t_1), \sigma(t_2)) +\eps.
\end{equation}

\subsection{Proof of Theorem \ref{kobloc}.}
\label{pfcor}

\begin{proof}
The left hand side inequality follows from the definitions.

To prove the right hand side, choose $U_0$ as in Theorem \ref{finsloc}. Let $V\subset \subset U\subset U_0$,
with $D\cap U $ connected.

Consider the Finsler metric $t_D$
on $D$ given by $t_D(z;X):= \kappa_{D_U}(z;X)$ for $z\in D_U$ and $t_D(z;X):= \infty$ for $z\in D\setminus U$.
Then clearly $T_D|_{D_U}=k_{D_U}$. We now need to show that the hypotheses of Theorem \ref{finsloc} hold
on an appropriate open set.

\begin{lemma}
\label{connhd}
There exists an open set $W_0$ such that $ V\subset \subset W_0 \subset \subset U$ and $D\cap W_0$
is connected.
\end{lemma}

Accepting the Lemma temporarily,
for each point $q\in U \cap \overline D$, \cite[Corollary 6.14]{BNT} implies that
there exists a neighborhood $V_q \subset \subset U$ such that
\begin{equation}
\label{kobupest}
\k_{D_U}(z;X) \le (1+c\delta_D(z))\k_D(z;X),\quad z\in D \cap {V_q},\ X\in\C^n,
\end{equation}
and shrinking $V_q$ as needed, we can take it so that $D \cap {V_q}$ is connected and
 $f$ is of the form $f(s)= c_q s$ on $V_q$.

Take a finite covering of $\overline W_0 \cap \overline D$ by $V_{q_j}$, $q_j\in \overline W_0$, $1\le j \le N$,
and let $W_1:= \bigcup_{j=1}^N V_{q_j} \subset \subset U$.
Since $D\cap W_1  = (D\cap W_0) \cup \bigcup_{j=1}^N (D\cap V_{q_j})$, it is connected as well.
On $W_1$, we have \eqref{kobupest} with $f(s) = c s$, $c:=\max_{1\le j \le N} c_{q_j}$.

We have $V \subset \subset W_1$, and we can apply
Theorem \ref{finsloc}  with $U:=W_1$ and $f(s)=cs$; then $\omega_f(s)=c's$ and
 \eqref{distupest}  yields the result.

To get \eqref{kobrat}, do as above but use \eqref{uprat} in Theorem \ref{finsloc}.
\end{proof}

\begin{proof*}
{\it Proof of Lemma \ref{connhd}.}


{\bf Claim.} For any $\eta>0$, there exists $\eps>0$ such that for any $z', z''\in  D\cap U$
with $\delta_U(z'), \delta_U(z'') \ge \eta$, then there exists a continuous curve $\gamma$ in $ D\cap U$
joining $z'$ to $z''$ such that for any $t$, $\delta_U(\gamma(t)) \ge \eps$.

Indeed, suppose that the Claim fails, and let $(z'_k, z''_k)_{k\ge 1}$ be such that
$$
\sup \left\{ \inf_{0\le t\le 1}\delta_U(\gamma(t)): \gamma(0)= z'_k, \gamma(1)= z''_k\right\}
\to 0 \mbox{ as } k \to \infty.
$$
Pick convergent subsequences tending to $z'_\infty, z''_\infty$ respectively.  If both limits
belong to $D_U$, some continuous curve connects them within $D_U$ and it violates the property above.
If one of the points, say $z'_\infty$, is in $\partial D$, then using the fact that $\partial D$ is
$\mathcal C^1$-smooth, we can find $r\in (0,\delta_U(z'_\infty))$ such that
$B(z'_\infty,r) \cap D \ni z'_\infty + t n_{z'_\infty}$
for any $t\in (0,r)$, where $n_p$ stands for the inner normal at a point $p\in\partial D$ (as usual,
we write $B(a,r)$ for the Euclidean ball centered at $a$, of radius $r$).
Then, using again the fact that $\partial D$ is $\mathcal C^1$-smooth, for $k$ large enough, the line segment from
$z'_k$ to $z'_\infty + \frac{r}2 n_{z'_\infty}$ is contained in $\bar B(z'_\infty,r) \cap D \cap U$
 and remains at distance at least $r/2$ from $\C^n\setminus U$; then, as before,
$z'_\infty +\frac{r}2 n_{z'_\infty}$ can be connected to $z''_\infty$ by
a curve which stays compactly within $D\cap U$,
and this violates our assumption again. If both $z'_\infty, z''_\infty \in \partial D$,
we construct two balls and two inward-pointing line segments as above, and connect their inner
extremities, again obtaining a contradiction.

Now let $\eta:=\mbox{dist}(V, \C^n\setminus U)$, by hypothesis $\eta>0$; choose $\eps>0$
depending on $\eta$ as in the Claim. For $z,w \in V \cap \overline D$, pick a curve $\gamma$ connecting them
with $\delta_U(\gamma(t)) \ge \eps$, and let
\[
N(z,w):= \bigcup_{0\le t\le 1} B\left(\gamma(t), \frac12 \min[\delta_U(\gamma(t)), \delta_D(\gamma(t))]  \right).
\]
That set is connected and open, $N(z,w)\subset D\cap U$ and $\mbox{dist}(N(z,w), \C^n\setminus U) \ge \eps/2$.
We take $W'_0:=  \bigcup_{z,w \in  D \cap V} N(z,w)$, which is connected and relatively compact in $U$;
 $W_0=V\cup W'_0 = (V\setminus D) \cup W'_0$.
Then $D\cap V \subset W'_0 = D\cap W_0$, and $W_0 \subset \subset U$.
\end{proof*}

\subsection{Kobayashi $\eps$-extremals cannot go far}
\label{notfar}

Theorem \ref{finsloc} will be proved with the help of several auxiliary results, some of which will be
ultimately superseded but are needed as intermediate steps.

\begin{prop}
\label{locgeod}
Let $D \subset \C^n$ be a   domain, $p\in \partial D$ such that  $D$ is $\C$-convexifiable near $p$. Let $U$
be a neighborhood of $p$ such that $D_U$  is biholomorphic to a $\C$-convex open set
under a biholomorphism of a neighborhood of $ \overline{D_U}$, and such that all points of
$U\cap \partial D$ are of type at most $m$.

Then  there exist a constant $C>0$ and a neighborhood $V \subset U$ of $p$ such that for any $z\neq w \in V$,
there exists $\eps_{z,w}>0$ such that
for any $\eps\in (0,\eps_{z,w})$,
for any 
$\eps$-extremal curve $\sigma$ for $k_D$ joining $z$ to $w$,
\begin{equation}
\left| \sigma(s)-z \right| \le C |z-w|^{1/m}, \mbox{ for any } s \in [a,b].
\end{equation}
An immediate consequence is that there exists a neighborhood $V'\subset V$ of $p$ such that
for any $z,w \in V'$, there exists  $\eps_{z,w}>0$
such that for any $\eps \in (0,\eps_{z,w})$, any 
$\eps$-extremal curve $\sigma$ for $k_D$ joining $z$ to $w$
satisfies $\sigma(s) \in D_U$, $a\le s \le b$.
\end{prop}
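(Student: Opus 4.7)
The plan is to squeeze the excursion of $\sigma$ between an upper bound on $k_D(z,w)$ coming from the $\C$-convex, finite-type hypothesis and the elementary Schwarz-lemma lower bound $\kappa_D(\zeta; X) \ge |X|/(2\delta_D(\zeta))$; the latter will convert the available Kobayashi budget into a bound on the Euclidean length of $\sigma$ once $\sigma$ is confined to a bounded piece of $D$. After invoking $\C$-convexifiability and using the biholomorphic invariance of $k_D$ and of the $\eps$-extremal property, I may assume throughout that $D_U$ itself is $\C$-convex and every point of $U \cap \partial D$ is of type at most $2m$.

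The upper-bound ingredient is
\[
k_D(z,w) \le k_{D_U}(z,w) \le C_1 |z-w|^{1/2m}, \qquad z,w \in V,
\]
for a sufficiently small neighborhood $V \subset\subset U$ of $p$. The first inequality is contractivity under inclusion, while the second is classical for $\C$-convex domains of type $2m$, obtained by constructing an analytic disc through $z$ and $w$ whose hyperbolic parameter separation is $O(|z-w|^{1/2m})$, using local support functions at the nearest boundary point together with the fact, recalled in the introduction, that for $\C$-convex domains the order of contact is already realized by affine complex lines.

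Next, fix an open set $U'$ with $\overline V \subset U' \subset \overline{U'} \subset U$. Since $p \in \partial D \cap U'$, one has $\delta_D(\zeta) \le \mathrm{diam}(U')$ for $\zeta \in D \cap U'$, hence $\kappa_D(\zeta; X) \ge |X|/(2\,\mathrm{diam}(U'))$ there. Suppose for contradiction that the $\eps$-extremal $\sigma$ exits $U'$, and let $s^*$ be the first such time; then
\[
\frac{\mathrm{dist}(V, \partial U')}{2\,\mathrm{diam}(U')} \le \int_a^{s^*} \frac{|\sigma'(t)|}{2\,\mathrm{diam}(U')}\,dt \le \int_a^{s^*} \kappa_D(\sigma(t); \sigma'(t))\,dt \le k_D(z,w) + \eps,
\]
which is impossible once $|z-w|$ and $\eps$ are sufficiently small (this pins down $\eps_{z,w}$). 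Hence $\sigma$ remains in $D \cap U'$, and the same chain applied up to any $s \in [a,b]$ yields
\[
|\sigma(s) - z| \le 2\,\mathrm{diam}(U')\,(k_D(z,w) + \eps) \le 2\,\mathrm{diam}(U')\,(C_1|z-w|^{1/2m} + \eps) \le C|z-w|^{1/2m},
\]
the last step using $\eps_{z,w} \le |z-w|^{1/2m}$. The immediate consequence about $\sigma(s) \in D_U$ then follows by choosing $V' \subset\subset V$ small enough that the $C|z-w|^{1/2m}$-neighborhood of $V'$ lies inside $U$ for $|z-w|$ small.

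The step I expect to be the main obstacle is the sharp upper bound $k_{D_U}(z,w) \le C_1 |z-w|^{1/2m}$: the weaker exponent $1/2$ is already standard for $\mathcal C^2$ boundaries, but attaining the type-$2m$ exponent requires constructing analytic discs with the correct order of contact against $\partial D$, and this is exactly where $\C$-convexity together with finite type (of affine complex lines) enters essentially. Verifying uniformity of $C_1$ for $z, w \in V$ is technical but standard in this setting.
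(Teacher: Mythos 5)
The central inequality your proof relies on is false. You claim $k_{D_U}(z,w)\le C_1|z-w|^{1/2m}$ for $z,w\in V$; but the Kobayashi distance between two points near the boundary does not tend to $0$ as $|z-w|\to 0$ unless $\delta_D(z),\delta_D(w)$ stay bounded away from $0$. Already in the unit disc ($m=1$): with $z=1-\delta$ and $w=(1-\delta)e^{i\delta}$ one has $|z-w|\asymp\delta\to 0$ while $k_{\D}(z,w)$ stays bounded below; with $z=1-\delta$, $w=1-\delta^2$ one even has $|z-w|\asymp\delta\to 0$ while $k_{\D}(z,w)\to\infty$. The correct local upper bound, which the paper invokes (\cite[Theorem 7]{NA}), has the form
\[
k_D(z,w)\le \log\left(1+\frac{2|z-w|}{\delta_D(z)^{1/2}\delta_D(w)^{1/2}}\right),
\]
which blows up as the points approach $\partial D$. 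Since your whole argument converts the ``Kobayashi budget'' $k_D(z,w)+\eps$ into a Euclidean length bound via the Schwarz inequality, and that budget is unbounded, the argument cannot yield the uniform bound $|\sigma(s)-z|\le C|z-w|^{1/2m}$.

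There is also a secondary issue: the lower bound $\kappa_D(\zeta;X)\ge |X|/(2\,\mathrm{diam}(U'))$ is not justified. It would follow from $D\subset B(\zeta,2\,\mathrm{diam}(U'))$, but the Proposition explicitly does \emph{not} assume $D$ bounded (the paper remarks on this point), and $\kappa_D$ depends on the whole of $D$, not on $D\cap U'$. For instance a domain containing an affine complex line through $\zeta$ has $\kappa_D(\zeta;X)=0$ in that direction, even if $D\cap U'$ is tiny. You need a localization statement for the infinitesimal metric near $p$ to replace this step, which is essentially the kind of result one is trying to prove.

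For comparison: the paper's proof also argues by contradiction, but the lower bound used for the cost of an excursion is the \emph{localized} estimate $k_D(z,z')\ge m\log\bigl(1+C_1|z'-z|/\delta_D(z)^{1/2m}\bigr)$ from \cite[Theorem 1.7]{NTh} (applied to $z'$, $w'$ the entry/exit points on $\partial B(z,C|z-w|^{1/2m})$), and the upper bound is the $\log(1+|z-w|/\sqrt{\delta_D(z)\delta_D(w)})$ estimate. Both sides then have the same $\delta_D$-dependence, and taking $C$ large produces the contradiction \emph{uniformly} in $\delta_D(z),\delta_D(w)$. That uniformity is exactly what your version lacks.
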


Note that we do not need to assume that $D$ itself is bounded, or even hyperbolic, in the above result.
All the action will take place at a local level, where we can reduce ourselves to the case of bounded
$\C$-convex domains. Note also that by \eqref{kobupest}, we have $\kappa_D (z;X) \ge c \|X\|$ on
each $V_q$, once $U_0$ has been chosen small enough.

\begin{proof}
We may assume that $V \subset B(p,\frac12)$, and $C\ge 2$.  Then if $q \in D \setminus B(z, C |z-w|^{1/m})$,
$|q-w|\ge (C-1) |z-w|^{1/m}$.

Shrinking $V$ further, we know by \cite[Theorem 7]{NA} that, for $z,w \in V$,
\begin{equation}
\label{NAupest}
k_D(z,w) \le \log \left( 1+ \frac{2 |z-w|}{\delta_D(w)^{1/2}\delta_D(z)^{1/2}} \right).
\end{equation}

Let $\sigma$ be an $\eps$-extremal curve.

Suppose that there exists $s_0 \in [a;b]$ such that $|\sigma(s_0) -z| > C |z-w|^{1/m}$.
Let $s_1:=\sup \{s: \sigma([a;s]) \subset B(z, C |z-w|^{1/m}) \}$,
$s_2:=\inf \{s: \sigma([s;b]) \subset B(z, C |z-w|^{1/m}) \}$, then $a<s_1<s_0<s_2<b$ and if we let $z':=\sigma(s_1)$,
$w':= \sigma(s_2)$, we have $z',w' \in \partial B(z, C |z-w|^{1/m})$.

Then
\begin{multline*}
k_D(z,w) \ge \int_{a}^{b} \k_D(\sigma(t);\sigma'(t))dt  -\eps  \\
\ge   \int_{a}^{s_1} \k_D(\sigma(t);\sigma'(t))dt +
\int_{s_2}^{b} \k_D(\sigma(t);\sigma'(t))dt - \eps \ge k_D(w,w')+k_D(z',z)- \eps.
\end{multline*}
We may reduce $V$ further so that $B(z, C |z-w|^{1/m})$ is contained in a small enough neighborhood of $p$
to apply to it \cite[Theorem 1.7]{NTh}, which yields a constant $C_1>0$ such that
\begin{equation}
\label{pisalow}
k_D(w,w') \ge \frac{m}2 \log \left( 1+C_1 \frac{|w'-w|}{\delta_D(w)^{1/m}} \right) \mbox{ and }
k_D(z,z') \ge \frac{m}2 \log \left( 1+C_1 \frac{|z'-z|}{\delta_D(z)^{1/m}} \right) ,
\end{equation}
so
\[
k_D(z,w) \ge \frac{m}2 \log \left( 1+C_1^2 \frac{C(C-1) |z-w|^{2/m}}{\delta_D(w)^{1/m}\delta_D(z)^{1/m}} \right) - \eps
\ge \log \left( 1+C_1^{m} \frac{(C-1)^m |z-w|}{\delta_D(w)^{1/2}\delta_D(z)^{1/2}} \right)- \eps.
\]
Choosing $\eps$ small enough, depending on $z$ and $w$, we have
\[
k_D(z,w) \ge \log \left( 1+C_1^{m} \frac{(C-1)^m |z-w|}{2\delta_D(w)^{1/2}\delta_D(z)^{1/2}} \right).
\]
Choosing $C$ large enough so that $C_1^{m}(C-1)^m > 4$, we reach a contradiction with \eqref{NAupest}.
\end{proof}

\noindent
{\bf Remark.}
Let $B_D(z,w)$ be as in \eqref{betterb}. We may improve Proposition \ref{locgeod} to obtain
\begin{equation}
\label{better21}
\left| \sigma(s)-z \right| \le B_D(z,w), \mbox{ for any } s \in [a,b].
\end{equation}
In fact, it will be enough to show
\begin{equation}
\label{overdelta}
\left| \sigma(s)-z \right| \le C \delta_D(z)^{-1/m}|z-w|^{2/m};
\end{equation}
we can then exchange the roles
of $z$ and $w$, and finally thanks to Proposition \ref{locgeod}, get
\begin{equation*}
\left| \sigma(s)-z \right| \le C \min \left( |z-w|^{1/m}, \frac{|z-w|^{2/m}}{ \delta_D(z)^{1/m}} , \frac{|z-w|^{2/m}}{ \delta_D(w)^{1/m} }\right),
\end{equation*}
which is equivalent to \eqref{better21}.

To prove \eqref{overdelta}, the significant case is when $|z-w| \le \frac12 \delta_D(z)$, and so $\delta_D(z) \asymp \delta_D(w)$.
Assume that the $\eps$-extremal curve gets out of $B(z,C \delta_D(z)^{-1/m}|z-w|^{2/m} )$ for a large $C$;
we define $z',w'$ as before, we have $k_D(z,w) \ge k_D(z,z')-\eps$,
so from \eqref{pisalow} we deduce this time
\[
k_D(z,w) \ge \frac{m}2 \log \left( 1+C_1 \frac{C |z-w|^{2/m}}{\delta_D(z)^{1/m} \delta_D(z)^{1/m}} \right) - \eps
\ge \log \left( 1+C_2 \frac{C |z-w|}{\delta_D(z)} \right),
\]
for $\eps$ small enough.  As before we get a contradiction with \eqref{NAupest} when $C$ is too big.

\subsection{Proof of \eqref{distupest}.}

\label{pfup}

We will need the following technical result.

\begin{lemma}
\label{locCcvx}
Let $D$ be a domain with boundary $\partial D$ of class $\mathcal C^k$, $k\ge 2$. Suppose that
$D$ is $\C$-convexifiable near a point $p \in \partial D$.  Then for any small enough neighborhood $W_1$ of $p$,
with $W_1$ and $D\cap W_1$ connected,
there exists a neighborhood $W_2 \subset W_1$ of $p$ such that $D\cap W_2$ is a $\mathcal C^k$-smooth
$\C$-convexifiable domain.
\end{lemma}
\begin{proof}
By the hypothesis, there is a neighborhood $W_0$ of $p$ and $\Phi$ a biholomorphism
on a neighborhood $V$ of $\overline{D\cap W_0}$ such that $\Phi(D\cap W_0)$ is $\C$-convex.

Let $W_1\subset W_0$ be a neighborhood of $p$ such that $D\cap W_1$ is connected.  Then
$\Phi(D\cap W_1)$ satisfies the hypotheses of \cite[Proposition 3.3]{NPT2} for the points
of $\Phi(\partial D)$ near $\Phi(p)$. So we obtain a neighborhood $U$ of $\Phi(p)$, which we
may reduce to have $U\subset \Phi(V\cap W_1)$, and $G_1$ a $\mathcal C^k$-smooth domain such that
$G_1\subset \Phi(D\cap W_1)$ and $\Phi(D\cap W_1) \cap U = G_1 \cap U$.

We then take $G:= \Phi^{-1} (G_1) \subset D\cap W_1$, $W_3:= \Phi^{-1} (U) \subset  W_1$.
We have $D\cap W_3= D\cap W_1\cap W_3 =G\cap W_3$. Now take $W_2:= G \cup W_3$.
\end{proof}

We first prove a local version of the result.

\begin{lemma}
\label{locprelim}
Under the hypothesis \eqref{metricupest} of Theorem \ref{finsloc}, there exists a neighborhood $U_0$ of $p$ such that
for any point $p'\in U_0 \cap \partial D$, there exists a neighborhood $V_{p'}$ of $p'$ such that
\eqref{distupest} holds for any $z,w \in D\cap V_{p'}$.
\end{lemma}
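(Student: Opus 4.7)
Let $z,w\in V_{p'}$, with $V_{p'}$ chosen small enough that for any $\eps$-almost-geodesic $\sigma:[0,L]\to D$ for $k_D$ joining $z$ to $w$ (parametrized by Kobayashi arclength, so $L\le k_D(z,w)+\eps$), Proposition \ref{locgeod} guarantees $\sigma([0,L])\subset B(z,C|z-w|^{1/2m})\cap D\subset D_U$. Then \eqref{metricupest} applies pointwise along $\sigma$ and
\[T_D(z,w)\le\int_0^L t_D(\sigma;\sigma')\,ds\le L+I,\qquad I:=\int_0^L f(\delta_D(\sigma(s)))\,ds,\]
so after letting $\eps\to 0$ it suffices to show $I\le c_1\int_0^{c_2|z-w|^{1/2m}}f(x)/x\,dx$.

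The main ingredient is the infinitesimal estimate $\kappa_D(\zeta;X)\ge c|X|/\delta_D(\zeta)^{1/2m}$ obtained by differentiating \eqref{pisalow}; at unit Kobayashi speed this forces $|\sigma'(s)|\le C\delta_D(\sigma(s))^{1/2m}$, so $g(s):=\delta_D(\sigma(s))^{1-1/2m}$ is Lipschitz in $s$ with uniform constant $C_0$. Set $H(u):=f(u^{2m/(2m-1)})$, so that $f(\delta_D(\sigma(s)))=H(g(s))$. The endpoint Lipschitz inequalities $g(s)\le g(0)+C_0 s$ and $g(s)\le g(L)+C_0(L-s)$, together with the confinement $g(s)\le R^{1-1/2m}$ given by Proposition \ref{locgeod} (with $R:=\delta_D(z)+C|z-w|^{1/2m}$), control the distribution of $g$. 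A layer-cake/integration-by-parts computation then yields
\[I=\int_0^\infty H'(u)\,|\{s:g(s)>u\}|\,du\le C\int_0^R f(x)\,x^{-1/2m}\,dx\le C\,R^{1-1/2m}\int_0^R\frac{f(x)}{x}\,dx,\]
the last inequality coming from $x^{-1/2m}\le R^{1-1/2m}\cdot x^{-1}$ for $x\le R$. Since $R^{1-1/2m}$ is bounded for $|z-w|$ small, $I\le C'\int_0^R f(x)/x\,dx$.

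To absorb $R$ into $|z-w|^{1/2m}$ I would use a dichotomy on the size of $\delta_D(z)$. If $\delta_D(z)\le|z-w|^{1/2m}$ then $R\le 2C|z-w|^{1/2m}$ and $\int_0^R f/x\,dx\le C\int_0^{c|z-w|^{1/2m}}f/x\,dx$, because multiplying the upper limit by a bounded factor changes the integral by a bounded factor (indeed, $\int_a^{Ka}f/x\,dx\le(K-1)f(a)\le 2(K-1)\int_0^a f/x\,dx$ for admissible $f$, using $f(x)/x$ decreasing and $\int_0^a f/x\,dx\ge f(a)/2$). Otherwise $\delta_D(z)>|z-w|^{1/2m}$, and Proposition \ref{locgeod} keeps $\sigma$ in a region where $\delta_D\asymp\delta_D(z)$, so $I\le L\cdot f(C\delta_D(z))$; combining $L\le C|z-w|/\delta_D(z)$ from \eqref{NAupest} with $f(C\delta_D(z))\le C\delta_D(z)\,f(|z-w|^{1/2m})/|z-w|^{1/2m}$ (from the monotonicity of $x\mapsto f(x)/x$) gives $I\le C'|z-w|^{1-1/2m}f(|z-w|^{1/2m})\le C''\int_0^{c|z-w|^{1/2m}}f(x)/x\,dx$. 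The main technical point is the layer-cake step, which requires carefully combining the two endpoint Lipschitz bounds on $g$ with the pointwise confinement in order to reach the correct form of the estimate.
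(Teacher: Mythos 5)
The overall frame (parametrize a $\mathcal C^1$ $\eps$-almost-geodesic by Kobayashi length, confine it by Proposition \ref{locgeod}, reduce to bounding $I=\int_0^L f(\delta_D(\sigma(s)))\,ds$) is the same as the paper's. The claimed mechanism for bounding $I$, however, has a genuine gap, and the gap sits precisely where the paper does the real work.

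Your bound on the distribution of $g(s)=\delta_D(\sigma(s))^{1-1/2m}$ is obtained only from (a) the Lipschitz property $|g'|\le C_0$ (coming from the infinitesimal estimate $\kappa_D\ge c|X|\delta_D^{-1/2m}$) and (b) the confinement $g\le R^{1-1/2m}$ from Proposition \ref{locgeod}. These two pieces of information give
\[
|\{s:g(s)>u\}|\le \min\bigl(L,\ (L-(u-g(0))_+/C_0-(u-g(L))_+/C_0)_+\bigr)\quad\text{for }u<R^{1-1/2m},
\]
and $=0$ for $u\ge R^{1-1/2m}$. Feeding this into the layer-cake integral and integrating by parts produces, besides the term $\frac{2}{C_0}\int_0^{R^{1-1/2m}}H(u)\,du$ (which is what you want and equals a constant times $\int_0^R f(x)\,x^{-1/2m}\,dx$), an uncontrolled boundary term of the form
\[
f(R)\,\Bigl(L-\tfrac{1}{C_0}\bigl(2R^{1-1/2m}-g(0)-g(L)\bigr)\Bigr)_+,
\]
that is, roughly $L\,f(R)$ when $L$ is large. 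This corresponds to the ``middle'' part of the curve, where the endpoint Lipschitz bounds have saturated and all you know is the confinement $\delta_D(\sigma)\le R$; there the Lipschitz bound allows $g$ to sit at $R^{1-1/2m}$ for a set of $s$ of measure comparable to $L$. Since $L\approx k_D(z,w)$ can be as large as $\log(1/\delta_D(z))$ while $R\approx |z-w|^{1/2m}$ stays fixed, $L\,f(R)$ is not bounded by $\int_0^{c|z-w|^{1/2m}}f(x)/x\,dx$. Concretely, for $D=\Pi$, $m=1$, $z=i\delta$, $w=\eta+i\delta$ and $\delta\to 0$ with $\eta$ fixed, your Lipschitz-plus-confinement bound gives $I\lesssim \int_0^{\eta^{1/2}}f(x)x^{-1/2}\,dx + \log(1/\delta)\,f(\eta^{1/2})\to\infty$, while the true value $I\approx\int_0^\eta f(x)/x\,dx$ is finite and satisfies the desired estimate; so the stated intermediate inequality $I\le C\int_0^Rf(x)x^{-1/2m}\,dx$ is simply false for, say, $f(x)=x^{1/4}$.

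What is missing is exactly the paper's Lemma \ref{expdec}: using the almost-geodesic property (i.e.\ the two-sided inequality $|t-s|-\eps\le k_D(\sigma(s),\sigma(t))$, not merely the arclength parametrization) together with the logarithmic upper bound \eqref{NAupest} on $k_D$ and the confinement, one derives an exponential decay $\delta_D(\sigma(t))\le C_1|z-w|^{1/2m}e^{-|t-s_0|}$ away from a bounded interval. This replaces the useless linear-in-$L$ estimate of the middle part by a convergent tail, $\int_0^\infty f(C_1|z-w|^{1/2m}e^{-t})\,dt=\int_0^{C_1|z-w|^{1/2m}}f(x)/x\,dx$, and it is what makes $I$ bounded uniformly in $L$. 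Your infinitesimal Lipschitz estimate is genuinely weaker than the almost-geodesic integral estimate and cannot be a substitute for it. The dichotomy you use at the end of your argument is fine (and mirrors the two cases in the paper), but it cannot repair the earlier layer-cake step.
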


\begin{proof}
We start with a neighborhood $U_0$ such that every point of $U_0\cap \partial D$ is of type at most $m$, and
$\C$-convexifiable (those are open properties and $p$ satisfies them).
By Lemma \ref{locCcvx}, we may reduce $U_0$ and assume that $D_{U_0}$ is Dini-smooth.

By \cite[Lemma 6.10]{BNT}, proved in \cite[Proposition 4.4]{BZ},
for any $\eps>0$, there exists an $\eps$-extremal curve for the Kobayashi metric
joining $z$ to $w$, $\sigma : [a;b] \rightarrow D$, which
is absolutely continuous and which
we may parametrize by  Kobayashi-Royden length, i.e. $\kappa_D(\sigma(s);\sigma'(s))=1$ a.e. in $s \in [a;b]$.
By Proposition \ref{locgeod}, $\sigma([a;b]) \subset D_{U_0}$ if $V_{p'}$ and $\eps$ are small enough.
Then
\begin{multline}
\label{integbd}
T_D(z,w) \le \int_a^b t_D ( \sigma(t);\sigma'(t)) dt
\\
\le
\int_a^b \left( 1+ f\left(\delta_D(\sigma(t))\right) \right) dt
\le k_D(z,w) + \eps + \int_a^b  f\left(\delta_D(\sigma(t))\right) dt.
\end{multline}
Now we need to see that when $b$ is large, the last integral remains bounded.
We will achieve this by proving that as $t$ tends to infinity, $\sigma(t)$ must tend to the boundary
at a certain rate.

The proof below is inspired by \cite[Theorem 6.5]{BNT}
and \cite{LW}; see also a similar inequality in \cite[Proof of Theorem 1.4, Claim 2]{BZ}.

\begin{lemma}
\label{expdec}
There exists $s_0 \in [a;b]$ and $C_1, C_2>0$, independent of $a,b$ and $\eps$ provided it is
small enough, such that for any $t\in [a;b]$
with $|t-s_0| \ge C_2$,
\[
\delta_D(\sigma(t)) \le C_1 |z-w|^{1/m} e^{-|t-s_0|}.
\]
\end{lemma}
\begin{proof}
For $z,w \in D\cap V_{p'}$, $V_{p'}$
a small enough neighborhood of $p'$, $\delta_{D_{U_0}}(\sigma(t)) = \delta_D(\sigma(t))$ for
any $t\in [a;b]$, by Proposition \ref{locgeod}.
For any $t,s \in [a;b]$, $\sigma (s), \sigma (t) \in D_{U_0}$,
and since $D_{U_0}$ is Dini-smooth we can apply the upper
estimate for the Kobayashi distance from \cite[Corollary 8]{NA} to it:
\begin{equation}
\label{logup}
-\eps + |t-s| \le  k_D ( \sigma (s), \sigma (t) ) \le k_{D_{U_0}} ( \sigma (s), \sigma (t) )
\le
\log \left( 1+ \frac{C_3 |\sigma (s) - \sigma (t)|}{\delta_D(\sigma (s))^{1/2}\delta_D(\sigma (t))^{1/2}} \right).
\end{equation}
This implies after exponentiation
\[
\delta_D(\sigma (s))^{1/2}\delta_D(\sigma (t))^{1/2} \le
\frac{C_3 |\sigma (s) - \sigma (t)|}{e^{|t-s|-\eps}-1} \le \frac{C_4 |z-w|^{1/m}}{e^{|t-s|-\eps}-1},
\]
by Proposition  \ref{locgeod}. Now choose $s=s_0 \in [a;b]$ such that $\delta_D(\sigma (s_0))$ is maximal, so that
the left hand side above is always larger than $\delta_D(\sigma (t))$.

Outside of an interval of fixed
length around $s_0$, $e^{|t-s_0|-\eps}-1 \ge \frac12 e^{|t-s_0|}$,
and therefore $\delta_D(\sigma (t)) \le C_1 |z-w|^{1/m}  e^{-|t-s_0|}$.
This finishes the proof of Lemma \ref{expdec}.
\end{proof}

Now from \eqref{integbd} we deduce
\begin{equation}
\label{TkCint}
T_D(z,w) \le k_D(z,w) + \eps+ 2 \min(C_2, b-a) f(\delta_D(\sigma (s_0))) + 2 \int_0^\infty f\left(C_1 |z-w|^{1/m} e^{-t}\right) dt,
\end{equation}
and making the change of variables $x:= C_1 |z-w|^{1/m} e^{-t}$ we see that the last integral is bounded by
$\omega_f \left( C_1 |z-w|^{1/m} \right)$.

By Proposition \ref{locgeod}, $\delta_D(\sigma (s_0)) \le \delta_D(z)+C_5|z-w|^{1/m}$.
We have two cases.

{\it Case 1:} $\delta_D(z)\le 2 C_5|z-w|^{1/m} $.

Then $f(\delta_D(\sigma (s_0))) \le f((C_5+2) |z-w|^{1/m}) \le \omega_f \left((C_5+2) |z-w|^{1/m}\right)$,
and we get \eqref{distupest} by letting $\eps$ tend to zero and
setting $C:= \max(C_1, C_5+2, 2+ 2 C_2)^2 $ and noticing that $\omega_f(Cx)\le C\omega_f(x)$.

{\it Case 2:} $\delta_D(z)\ge 2 C_5|z-w|^{1/m} $.

The hypothesis of Case 2 implies $\delta_D(w)\ge \frac12\delta_D(z)$ and $\delta_D(\sigma (s_0)) \le \frac32 \delta_D(z)$.
Applying \eqref{NAupest}, we see that for $\eps$ small enough,
\[
b-a \le 2k_D(z,w) \le 2 \frac{2 |z-w|}{\delta_D(w)^{1/2}\delta_D(z)^{1/2}}  \le \frac{4\sqrt2 |z-w|}{\delta_D(z)} .
\]
So
\[
 (b-a) f(\delta_D(\sigma (s_0))) \le C' |z-w| \frac{f(\delta_D(\sigma (s_0)))}{\delta_D(\sigma (s_0))}
 \le C' |z-w|\frac{f( |z-w|^{1/m})}{ |z-w|^{1/m}} \le C'' f( |z-w|^{1/m}),
 \]
 and we conclude as in Case 1.
This finishes the proof of Lemma \ref{locprelim}.
\end{proof}

\begin{proof*}{\it End of the proof of Theorem \ref{finsloc}, \eqref{distupest}.}

We proceed by contradiction and assume that there are two sequences $(z_k), (w_k)$ in $V$ such that
\[
T_D (z_k;w_k) - k_D (z_k;w_k) \ge k \omega_f (|z_k-w_k|^{1/m}).
\]
 Passing to subsequences, we may assume
that $z_k \to z_\infty \in \overline V$ and $w_k \to w_\infty \in \overline V$.

{\it Case 1: $z_\infty=w_\infty$.}

If $z_\infty \in \partial D$, then 
for $k$ large enough, $z_k, w_k \in V_{z_\infty}$, and Lemma \ref{locprelim} contradicts the assumption.

If $z_\infty \in  D $, then for $k$ large enough, $z_k, w_k$ are away from $\partial D$,
and   $\delta_D (\sigma (t))$ is bounded from above and below for all $t\in [a;b]$. Then $T_D (z_k;w_k) - k_D (z_k;w_k)
\le C |z_k- w_k|$, which contradicts the assumption again because $\lim_{x\to 0^+} \frac{ f(x)}x >0$.

{\it Case 2: $z_\infty \neq w_\infty$.}

Here the hypotheses imply in particular
\[
\lim_{k\to\infty} k \omega_f (|z_k-w_k|^{1/m}) = \infty.
\]

{\it Case 2.1:} $z_\infty, w_\infty \in D$.  Since
$D_V$ is connected, $T_D(z_k,w_k)$ is bounded above, which contradicts the assumption.

{\it Case 2.2:} $z_\infty, w_\infty \in \partial D$.
We can pick neighborhoods $\tilde V_z \subset V_{z_\infty}$, $\tilde V_w \subset V_{w_\infty}$, such that
$\overline{\tilde V_z} \cap \overline{\tilde V_w}=\emptyset$. Let $N_p$ denote
the inner normal half-line at a boundary point $p$, and
pick points $z' \in N_{z_\infty} \cap \partial \tilde V_z$, $w' \in N_{w_\infty} \cap \partial \tilde V_w$.
Reducing $\tilde V_z, \tilde V_w$ if needed, we may assume
$ \max(|z_\infty-z'|, |w_\infty-w'|) < |z_\infty- w_\infty|$.

For $k$ large enough, by Lemma \ref{locprelim} and the above inequality,
\begin{multline}
\label{tbeg}
T_D(z_k,w_k) \le T_D(z_k,z') +  T_D(z',w')+ T_D(w',w_k) \\
\le
T_D(z',w')+ k_D(z_k,z') + k_D(w',w_k) + 2 C \omega_f (|z_\infty- w_\infty|).
\end{multline}
By \cite[Theorem 1.6]{NTh},   shrinking $U_0$ if needed (using Lemma \ref{locCcvx}),
\begin{multline*}
k_D(z_k,w_k) \ge  \frac{m}2 \log\left(  1+ \frac{C |z_k-w_k|}{\delta_D(z_k)^{1/m}}\right)
\left(  1+ \frac{C |z_k-w_k|}{\delta_D(w_k)^{1/m}}\right)
\\
\ge
\log \left(   \frac{ |z_k-w_k|^{m/2}}{\delta_D(z_k)^{1/2}}\right) +
\log \left(   \frac{ |z_k-w_k|^{m/2}}{\delta_D(w_k)^{1/2}}\right)-C
\ge \log    \frac{ 1}{\delta_D(z_k)^{1/2}}+\log    \frac{ 1}{\delta_D(w_k)^{1/2}} -C,
\end{multline*}
where $C$ is some constant depending on $z_\infty, w_\infty$;
while by the introduction of \cite{NPT1}, itself relying on \cite{FR} and \cite{JP},
\[
k_D(z_k,z') \le \frac12 \log \left(  1+ \frac{ |z_k-z'|}{\delta_D(z_k)}\right) + C
\le \frac12 \log \frac{ 1}{\delta_D(z_k)} + C,
\]
where $C$ depends on the choices made for $\tilde V_z, z'$. An analogous inequality
holds for $w_k$ and $w'$. Putting the inequalities together, we obtain
\[
k_D(z_k,z') + k_D(w_k,w') \le k_D(z_k,w_k) +C.
\]
Then \eqref{tbeg} implies
\[
T_D(z_k,w_k) \le   T_D(z',w') + C + k_D(z_k,w_k) + 2 C \omega_f (|z_\infty- w_\infty|),
\]
and since the first two terms are bounded, thus
 negligible in front of $k \omega_f (|z_k-w_k|^{1/m})$,
this contradicts the assumption.

{\it Case 2.3:} $z_\infty \in D, w_\infty \in \partial D$.

This time we only choose a neighborhood $\tilde V_w$ and a point $w' \in N_{w_\infty} \cap \partial \tilde V_w$.
The reasoning is analogous to the previous case, but simpler.
\end{proof*}

\subsection{Proof of Theorem \ref{finsloc}, \eqref{distlowest}.}
\ {}

To prove \eqref{distlowest} in Theorem \ref{finsloc},
we will switch the respective roles of $k_D$ and $T_D$.
We will need to establish some facts about the behavior of $T_D$, which is not as regular and well known as
the Kobayashi pseudo-distance.

It will be convenient to use some of the results proved above.  To this end,
given $t_D$ satisfying the hypothesis \eqref{metriclowest}, define
\begin{equation}
\label{deftz}
t_D^0(z;X):= \min \left( t_D(z;X) ; \left( 1+ f(\delta_D(z)) \right)^{-1} \kappa_D (z;X) \right).
\end{equation}
For $z\in U$, $t_D^0 (z;X) \le \kappa_D(z;X)$, and so $t_D^0$ satisfies both \eqref{metricupest} and \eqref{metriclowest}.
Obviously, $T_D^0 \le T_D$, so proving \eqref{distlowest} for $T_D^0$ will imply the same conclusion
for $T_D$.
So henceforth we assume that $t_D(z;X) = t_D^0(z;X)$, which implies in particular $t_D\le \kappa_D$
and $T_D \le k_D$.

We start with a rough intermediate estimate.
\begin{lemma}
\label{cstdiff}
Let $U$ be a $\C$-convexifiable neighborhood of $p$ such that every point of $U\cap \partial D$ is of type at most $m$ and that $D_U$ is Dini-smooth.
Let $z,w\in D_{U}$ be such that they can be joined by a $\mathcal C^1$-smooth $\eps$-extremal curve $\sigma$ for $T_D$ which remains inside $D_{U}$.
Then
\begin{equation}
\label{revcst}
k_D (z,w) \le  T_D (z,w) +C.
\end{equation}
\end{lemma}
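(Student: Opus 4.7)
My plan is to mimic the proof of Lemma \ref{expdec}, but applied to the given $T_D$-extremal curve $\sigma$ rather than to a Kobayashi almost-geodesic. Set $L := \int_a^b t_D(\sigma;\sigma')\,du$, so that $L \le T_D(z,w)+\eps$. The reduction made just before the lemma (passing to $t_D^0$) gives $t_D \ge (1+f(\delta_D))^{-1}\kappa_D$ on $D_{U}$, hence $\kappa_D(\sigma;\sigma') \le (1+f(\delta_D(\sigma)))\,t_D(\sigma;\sigma')$ pointwise along $\sigma$, so
\[
k_D(z,w) \le \int_a^b \kappa_D(\sigma;\sigma')\,du \le L + \int_a^b f(\delta_D(\sigma(u)))\,t_D(\sigma;\sigma')\,du.
\]
The task therefore reduces to bounding $J := \int_a^b f(\delta_D(\sigma))\,t_D\,du$ by a constant depending only on $U_0$ and $f$.

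For this I would establish exponential decay of $\delta_D(\sigma(u))$ in the $t_D$-arclength away from a base point. Because $\sigma$ stays inside $D_{U_0}$, the Euclidean displacement $|\sigma(s)-\sigma(t)|$ is bounded by $\mathrm{diam}(D_{U_0})$. For $s<t$, combining \eqref{partlength} with the inequality $T_D \le k_D$ (consequence of $t_D \le \kappa_D$) and the upper bound \eqref{NAupest} from \cite[Theorem 7]{NA} (valid after shrinking $U_0$ if necessary, thanks to $\C$-convexifiability, with a constant $C$ replacing the $2$) yields
\[
\int_s^t t_D\,du \;\le\; k_D(\sigma(s),\sigma(t)) + \eps \;\le\; \log\!\left(1 + \frac{C\,\mathrm{diam}(D_{U_0})}{\delta_D(\sigma(s))^{1/2}\delta_D(\sigma(t))^{1/2}}\right) + \eps.
\]
Choose $s_0 \in [a,b]$ maximizing $\delta_D(\sigma(\cdot))$ and set $M(u) := \left|\int_{s_0}^u t_D\,dv\right|$. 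Applying the above inequality to the pair $(s_0,u)$ and using $\delta_D(\sigma(u))\le\delta_D(\sigma(s_0))$ one gets, for some constants $C_1, C_2$ absorbing $\eps$, that $M(u)\ge C_2$ implies $\delta_D(\sigma(u))\le C_1 e^{-M(u)}$.

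The final step splits $J$ into the region $\{M(u)\le C_2\}$ and its complement. On the first region, the $t_D$-measure $\int t_D\,du$ is at most $2C_2$ by definition of $M$, and $f(\delta_D(\sigma))\le f(\mathrm{diam}(D_{U_0}))$, contributing at most $2C_2\,f(\mathrm{diam}(D_{U_0}))$. On the complement, the substitutions $m=M(u)$ (so $dm = t_D\,du$ on each monotone piece) and then $x = C_1 e^{-m}$ transform the contribution into at most
\[
2\int_{C_2}^{\infty} f(C_1 e^{-m})\,dm \;=\; 2\int_0^{C_1 e^{-C_2}} \frac{f(x)}{x}\,dx,
\]
which is finite by admissibility of $f$. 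Combining, $J$ is bounded by a constant $C'$ depending only on $U_0$ and $f$, and \eqref{revcst} follows with $C := C'+\eps$ (harmless, since the $\eps$ of the hypothesis may be fixed, say $\le 1$).

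The main obstacle is verifying the exponential decay in the $t_D$-arclength parametrization rather than in the usual Kobayashi-Royden one; this forces careful use of the inequality $T_D\le k_D$ together with \eqref{partlength} to transfer information from $T_D$ to $k_D$ where the sharp upper bound \eqref{NAupest} is available, and requires that $\sigma$ indeed remain in the $\C$-convexifiable piece $D_{U_0}$ in order to apply it.
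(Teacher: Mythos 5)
Your proof is correct and follows the same strategy as the paper: repeat the argument of Lemma \ref{locprelim} with the roles of $(\kappa_D,k_D)$ and $(t_D,T_D)$ interchanged, bound $k_D(z,w)$ by $T_D(z,w)+\eps$ plus an error integral of $f(\delta_D(\sigma))$, and control that integral via exponential decay of $\delta_D$ along the $\eps$-extremal curve, using \eqref{partlength}, the inequality $T_D\le k_D$ (available after the reduction to $t_D^0$), and the upper bound \eqref{NAupest}.

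The one place where your route differs from the paper is technical and in your favor. The paper passes from $\int_a^b f(\delta_D(\sigma))\,t_D\,dt$ to $\int_a^b f(\delta_D(\sigma))\,\kappa_D\,dt$ (via $t_D\le\kappa_D$) and then reparametrizes by Kobayashi--Royden arclength before invoking ``the analogue of Lemma \ref{expdec}.'' For that reparametrized curve, the relation between $|t-s|$ (now the $\kappa_D$-arclength) and $k_D(\sigma(s),\sigma(t))$ is no longer the clean $\eps$-almost-geodesic one; one only has $|t-s|\le (1+f(\mathrm{diam}))\,(k_D(\sigma(s),\sigma(t))+\eps)$, so the exponential decay comes with a multiplicative factor in the rate, which then has to be carried through the change of variable. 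Your choice to stay in $t_D$-arclength bypasses this: \eqref{partlength} and $T_D\le k_D$ give immediately $\int_s^t t_D\,du\le k_D(\sigma(s),\sigma(t))+\eps$, so the exponential decay of $\delta_D(\sigma)$ is obtained directly in the variable $M(u)$ with no multiplicative loss, and the substitution $m=M(u)$ is tight. The rest of the accounting (splitting at the threshold $C_2$, $f$ increasing, admissibility to get $\int_0^A f(x)/x\,dx<\infty$, absorbing the additive $\eps$) is as in the paper.

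One small caution you should keep in mind, shared with the paper's sketch: the estimate \eqref{NAupest} really controls $k_{D_{U_0}}$ with the internal distance $\delta_{D_{U_0}}$, and one wants the conclusion in terms of $\delta_D$. Along the curve the two distances coincide once $\delta_{D_{U_0}}(\sigma(u))$ drops below the distance from $\sigma([a,b])$ to $\partial U_0$, which is exactly the regime where the exponential decay kicks in; so the argument is sound in the way the lemma is used in Lemma \ref{geodcontrol}, where the curve is confined to a relatively compact $V_0\subset\subset U_0$. It is worth stating this explicitly if you write it up.
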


\begin{proof}
We can repeat the proof of Lemma \ref{locprelim} with $t_D, T_D$ playing the respective roles of $\kappa_D, k_D$.
Reparametrize $\sigma$ by $t_D$-length.
\begin{multline}
\label{integbd2}
k_D(z,w) \le \int_a^b \kappa_D ( \sigma(t);\sigma'(t)) dt
\\
\le
\int_a^b \left( 1+ f\left(\delta_D(\sigma(t))\right) \right)  dt
\le T_D(z,w) + \eps +  \int_a^b  f\left(\delta_D(\sigma(t))\right)   dt.
\end{multline}

\begin{lemma}
\label{expdect}
There exists $s_0 \in [a';b']$ and $C_1, C_2>0$, independent of $a',b'$ and $\eps$ provided it is
small enough, such that for any $t\in [a';b']$
with $|t-s_0| \ge C_2$,
\[
\delta_D(\tilde \sigma(t)) \le C_1 |z-w|^{1/m} e^{-|t-s_0|}.
\]
\end{lemma}
\begin{proof}
Indeed,
\[
t_D (\tilde \sigma (s), \tilde \sigma (t) ) \le  k_D (\tilde \sigma (s), \tilde \sigma (t) ) \le k_{D_U} (\tilde \sigma (s),\tilde \sigma (t) )
 \]
and then we can use the rest of \eqref{logup} and the estimates following it to
 prove the Lemma in the same way as before.
\end{proof}
Lemma \ref{expdect} implies the analogue of \eqref{TkCint},
\begin{equation}
\label{kTCint}
k_D(z,w) \le T_D(z,w) + \eps+ 2 \min(C_2, b-a) f(\delta_D(\sigma (s_0))) + 2 \int_0^\infty f\left(C_1 |z-w|^{1/m} e^{-t}\right) dt
\end{equation}
setting $x:= C_1 |z-w|^{1/m} e^{-t}$, the last integral becomes
$\omega_f \left( C_1 |z-w|^{1/m}\right) \le C$,
using the fact that $D_{U_0}$ is bounded.
\end{proof}

\begin{lemma}
\label{geodcontrol}
Let $t_D$ be the Finsler pseudometric defined in \eqref{deftz}, and $p$ and $U$ be as in Lemma \ref{cstdiff}.
Let $V_0 \subset  \subset U$ be a neighborhood of $p$. Then
there exists $V_1 \subset \subset V_0$, another neighborhood of $p$, such that if $z,w \in V_1$,
and $\eps>0$ small enough, then any $\eps$-extremal curve $\sigma : [a;b] \rightarrow D$
 for $T_D$ joining $z$ to $w$ has to verify $\sigma([a;b])\subset V_0$.
\end{lemma}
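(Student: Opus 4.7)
The plan is to argue by contradiction, paralleling the proof of Proposition \ref{locgeod}, but now working with the inner pseudodistance $T_D$ and using Lemma \ref{cstdiff} to convert $T_D$-length estimates on subcurves that remain inside $D_{U_0}$ into $k_D$-estimates.

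Suppose the conclusion fails. Then, shrinking $V_1$ through the family $V_1^{(k)} := V_0 \cap B(p, 1/k)$, I can find $z_k, w_k \in V_1^{(k)}$, $\varepsilon_k \to 0^+$, and $\mathcal C^1$ $\varepsilon_k$-extremal curves $\sigma_k : [a_k, b_k] \to D$ for $T_D$ joining $z_k$ to $w_k$, each of which exits $V_0$. Such $\mathcal C^1$ extremals exist by the construction of \cite[Lemma 6.10]{BNT} adapted to $T_D$. After passing to a subsequence, $z_k, w_k \to p$, hence $|z_k - w_k| \to 0$ and $\delta_D(z_k), \delta_D(w_k) \to 0$. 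Let $s_{1,k}$ be the first time $\sigma_k$ meets $\partial V_0$ and $s_{2,k}$ the last, and put $z'_k := \sigma_k(s_{1,k})$, $w'_k := \sigma_k(s_{2,k}) \in \partial V_0 \cap D$. Since $\overline{V_0} \subset U_0$, the subcurves $\sigma_k|_{[a_k, s_{1,k}]}$ and $\sigma_k|_{[s_{2,k}, b_k]}$ remain in $D_{U_0}$, and by \eqref{partlength} are themselves $\varepsilon_k$-extremal for $T_D$ between their endpoints. Applying Lemma \ref{cstdiff} to each subcurve, combining with $T_D \le k_D$, and invoking \eqref{partlength} once more for the full curve $\sigma_k$, I obtain
\[
k_D(z_k, z'_k) + k_D(w'_k, w_k) \le T_D(z_k, w_k) + 2C + \varepsilon_k \le k_D(z_k, w_k) + 2C + \varepsilon_k .
\]

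Next, I plug in the classical sharp boundary estimates for $k_D$ available near a $\C$-convexifiable point of type at most $2m$. Since $|z_k - z'_k|, |w_k - w'_k| \ge d := \mathrm{dist}(p, \partial V_0)/2 > 0$ for $k$ large, \eqref{pisalow} from \cite[Theorem 1.7]{NTh} gives
\[
k_D(z_k, z'_k) + k_D(w'_k, w_k) \ge m \log\!\Bigl(1 + \tfrac{C_1 d}{\delta_D(z_k)^{1/2m}}\Bigr) + m \log\!\Bigl(1 + \tfrac{C_1 d}{\delta_D(w_k)^{1/2m}}\Bigr),
\]
while \eqref{NAupest} from \cite[Theorem 7]{NA} gives
\[
k_D(z_k, w_k) \le \log\!\Bigl(1 + \tfrac{2|z_k - w_k|}{\delta_D(z_k)^{1/2}\delta_D(w_k)^{1/2}}\Bigr).
\]

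The main difficulty is extracting the contradiction from these two inequalities in all geometric regimes, since, unlike in Proposition \ref{locgeod}, there is no free constant we can make large. I split according to whether $|z_k - w_k|/\sqrt{\delta_D(z_k)\delta_D(w_k)}$ remains bounded. In the bounded (``non-tangential'') regime the upper bound is $O(1)$ whereas the lower bound behaves as $\tfrac12 \log(1/(\delta_D(z_k)\delta_D(w_k))) + O(1) \to +\infty$, an impossibility. In the unbounded (``tangential'') regime the divergent pieces $-\tfrac12 \log \delta_D(z_k) - \tfrac12 \log \delta_D(w_k)$ cancel between the two sides and one is left with $2m \log(C_1 d) \le \log(2|z_k - w_k|) + O(1)$, whose right-hand side tends to $-\infty$ since $|z_k - w_k| \to 0$, again a contradiction. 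The remaining technical point is to have chosen $V_0$ small enough at the outset so that both \eqref{pisalow} and \eqref{NAupest} apply throughout $V_0 \cap D$, which is automatic from the local character of those statements together with the $\C$-convexifiability hypothesis.
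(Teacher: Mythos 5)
Your proof is correct and follows essentially the same route as the paper's: you define the exit and last-entry points $z',w' \in \partial V_0$, use \eqref{partlength} to reduce to the subcurves, apply Lemma \ref{cstdiff} to convert $T_D$ into $k_D$ on those pieces, and play \eqref{pisalow} against \eqref{NAupest}. The only cosmetic differences are that you package the shrinking of $V_1$ as a subsequence argument and split the final contradiction into a bounded/unbounded-tangentiality dichotomy, whereas the paper reaches the same contradiction in one step by exponentiating the combined inequality.
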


\begin{proof}
Suppose to get a contradiction that there exists $s_0 \in [a;b]$ such that $\sigma(s_0)\notin V_0$.
Let $s_1:=\sup \{s: \sigma([a;s]) \subset V_0\}$,
$s_2:=\inf \{s: \sigma([s;b]) \subset V_0\}$, then $a<s_1<s_0<s_2<b$ and if we let $z':=\sigma(s_1)$,
$w':= \sigma(s_2)$, we have $z',w' \in \partial V_0$. We can apply \eqref{revcst} to get
\[
k_D (z,z') -C \le  T_D (z,z'), \quad k_D (w',w) -C \le  T_D (w',w),
\]
so that, by  \eqref{partlength}  and by \eqref{NAupest},
\begin{multline*}
k_D (z,z')+k_D (w',w) -2C \le T_D (z,z')+T_D (w',w)\le T_D (z,w)+3\eps  \\
\le k_D (z,w)+3\eps  \le  \log\left( 1+ \frac{C |z-w|}{\delta_D(w)^{1/2}\delta_D(z)^{1/2}} \right)+3\eps.
\end{multline*}
On the other hand, since a neighborhood of $p$ is $\C$-convexifiable, we have by \eqref{pisalow}
\begin{multline*}
k_D (z,z')+k_D (w',w) \ge \log   \frac{1}{\delta_D(w)^{1/2}} +  \log   \frac{1}{\delta_D(z)^{1/2}}
+m \log \mbox{dist}(\partial V_0, V_1) - C_2 \\
\ge \log   \frac{1}{\delta_D(w)^{1/2}\delta_D(z)^{1/2}}-C_3,
\end{multline*}
which contradicts the previous inequality when $V_1$, and therefore $|z-w|$, $\delta_D(z)$ and $\delta_D(w)$,
are small enough.
\end{proof}

Now we can give more precise bounds for $T_D$.
\begin{lemma}
\label{tbounds}
Let $t_D$ be the Finsler pseudometric defined in \eqref{deftz}, and $p\in \partial D$ as in Lemma  \ref{cstdiff}.
Then there exist a neighborhood $V$ of $p$ and constants $C', C''$ such that
for any $z,w \in V$,
\begin{equation}
\label{tdest}
m \log\left(  1+ \frac{C' |z-w|}{\delta_D(z)^{1/m}}\right)
\left(  1+ \frac{C' |z-w|}{\delta_D(w)^{1/m}}\right)
\le T_D(z,w) \le
 \log\left( 1+ \frac{C'' |z-w|}{\delta_D(z)^{1/2}\delta_D(w)^{1/2}} \right) .
\end{equation}
\end{lemma}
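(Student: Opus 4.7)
The plan is to obtain the upper bound directly from $t_D\le\kappa_D$ combined with the upper estimate \eqref{NAupest} for $k_D$, and to obtain the lower bound by combining two complementary lower estimates for $T_D$ with the lower estimate of \cite[Theorem 1.6]{NTh} for $k_D$.

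For the upper bound, by the definition \eqref{deftz} we have $t_D=t_D^0\le\kappa_D$ pointwise, so $T_D(z,w)\le k_D(z,w)$; for $V$ small enough around $p$, the estimate \eqref{NAupest} then yields the claimed upper bound with $C''=2$.

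For the lower bound I take $V\subset V_1$ with $V_1$ as in Lemma \ref{geodcontrol} and combine two facts on $V$. First, since $\delta_D$ is bounded on $V_0$ and $f$ is increasing, $M:=f(\operatorname{diam}(V_0))<\infty$, so $t_D^0\ge(1+M)^{-1}\kappa_D$ on $V_0$; integrating along any $\eps$-extremal curve for $T_D$, which by Lemma \ref{geodcontrol} lies entirely in $V_0$, and letting $\eps\to 0$, yields $T_D(z,w)\ge c\,k_D(z,w)$ with $c:=(1+M)^{-1}$. Second, by Lemmas \ref{geodcontrol} and \ref{cstdiff} together, $T_D(z,w)\ge k_D(z,w)-C_0$ for some $C_0>0$ uniform on $V$. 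Setting $a:=|z-w|/\delta_D(z)^{1/2m}$, $b:=|z-w|/\delta_D(w)^{1/2m}$, and $X:=(1+C_1a)(1+C_1b)$, \cite[Theorem 1.6]{NTh} gives $k_D\ge m\log X$, so the combined lower bound reads $T_D\ge m\log\phi(X)$, where $\phi(X):=X^c$ for $X\le X_0:=e^{C_0/(m(1-c))}$ and $\phi(X):=Xe^{-C_0/m}$ for $X\ge X_0$; the two branches match at $X_0$ by construction.

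It remains to choose $C'>0$ small enough so that $(1+C'a)(1+C'b)\le\phi(X)$ for all $a,b\ge 0$. Writing $u=1+C_1a$, $v=1+C_1b$, $q=C'/C_1$, a short calculation shows that under the constraint $uv=X$ with $u,v\ge 1$, the product $((1-q)+qu)((1-q)+qv)$ is maximized at the endpoints $u=1$ or $v=1$, with value $1+q(X-1)$; the verification therefore reduces to the one-variable inequality $1+q(X-1)\le\phi(X)$ on $[1,\infty)$, handled separately on each of the two regimes. The main obstacle, and the reason I expect this step to require care, is precisely this interpolation: bound (a) with $c<1$ is too weak for large $X$, since $X^c$ grows strictly slower than any linear function, so it cannot by itself yield a bound of the form $m\log((1+C'a)(1+C'b))$; bound (b) is too weak for $X$ near $1$, since the additive constant $C_0$ cannot be absorbed into $\log$ near its zero; only their combination, spliced continuously at $X_0$, produces a bound of the desired form.
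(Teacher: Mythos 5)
Your proof is correct and follows essentially the same route as the paper's. The upper bound is obtained identically, from $t_D=t_D^0\le\kappa_D$ (hence $T_D\le k_D$) together with \eqref{NAupest}. For the lower bound, the paper also combines precisely the two facts you isolate -- the multiplicative bound $T_D\ge c\,k_D$ coming from Lemma \ref{geodcontrol} together with the infinitesimal comparison on $D_U$, and the additive bound $T_D\ge k_D-C$ from Lemma \ref{cstdiff} -- and then, after invoking the NTh lower estimate for $k_D$, performs a case split into small and large values of the logarithm's argument. The paper expresses the split in terms of $A:=[(1+C_0a)(1+C_0b)]^m-1$ and asserts tersely that one may pass to $\log(1+A/C_1)$, whereas you carry out the interpolation in the product variable $X=(1+C_1a)(1+C_1b)$, define the spliced envelope $\phi$, and verify the one-variable inequality $1+q(X-1)\le\phi(X)$ after first maximizing over $u,v$ with $uv=X$. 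This makes the final arithmetic step more explicit than in the paper, but the decomposition and the mechanism (linear-in-$X$ lower bound on $T_D$ for small $X$ spliced with a translate of $X$ for large $X$) are the same.
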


\begin{proof}
Since $T_D(z,w) \le k_D(z,w)$,  the right hand estimate follows by \eqref{NAupest}.

By \cite[Theorem 1.6]{NTh},
\[
k_D(z,w) \ge m \log\left(  1+ \frac{C_0 |z-w|}{\delta_D(z)^{1/m}}\right)
\left(  1+ \frac{C_0 |z-w|}{\delta_D(w)^{1/m}}\right) =: \log (1+A).
\]

For $V$ small enough, by Lemma \ref{geodcontrol},
any $\eps$-extremal curve $\sigma$ for $T_D$ will remain within $D\cap U$, with $U$ as in the hypothesis
\eqref{metriclowest} of Theorem \ref{finsloc};
so $\kappa_D(\sigma(t),X) \le C t_D(\sigma(t),X)$ for all $t$, and this proves $k_D(z,w)\le CT_D(z,w)$
whenever $z,w \in V$.  Along with Lemma \ref{cstdiff}, this shows that
\[
T_D(z,w) \ge \max(k_D(z,w) -C, \frac1C k_D(z,w)) \ge \max\left( \log(1+A)-C, \frac{\log(1+A)}C\right).
\]
By splitting into the two cases $A\le e^{2C}$ and $A\ge e^{2C}$, one sees that, taking $C_1:=e^{2C}$,
$T_D(z,w) \ge \log (1+\frac{A}{C_1})$, therefore \eqref{tdest} holds with $C'= C_0/C_1$.
\end{proof}

We resume the proof of Theorem \ref{finsloc}, \eqref{distlowest}.

By Lemma \ref{tbounds},  $T_D$ satisfies the  estimates \eqref{tdest},
which are the analogues for $T_D$ of \eqref{NAupest} (up to a constant)  and \eqref{pisalow} for $k_D$,
so that the analogue of Proposition
\ref{locgeod} holds for $\eps$-extremal curves for $T_D$. This sharpens
the control of $\eps$-extremal curves for $T_D$ already obtained in Lemma \ref{geodcontrol}.

We now prove the analogue of Lemma \ref{locprelim}:

\begin{lemma}
\label{locprelim2}
We can choose $U_0$ such that for any $p'\in U_0 \cap \partial D$,
there exists a small neighborhood $V'$ of $p'$ such that
 \eqref{distlowest} in Theorem \ref{finsloc} holds
when $z,w \in V'$.
\end{lemma}

\begin{proof}
We choose $U_0$  a neighborhood of $p$ satisfying all the above results. We want
$k_D (z;w) \le T_D (z;w) + C  \omega_f( |z-w|^{1/m}),$ for $z,w \in V'$.


Pick $z,w \in V'$ small enough so that any  (absolutely continuous) $\eps$-extremal curve for $T_D$ remains
within $U$, and that Lemmas \ref{geodcontrol} and \ref{tbounds} apply.  The analogue of Proposition
\ref{locgeod} implies that $\delta_D(\sigma (s_0)) \le \delta_D(z)+C_5|z-w|^{1/m}$, and that we can finish
the majorization of the last integral in \eqref{kTCint} as in the end of the proof of Lemma \ref{locprelim}.
\end{proof}

To have the result as stated in the theorem, we can again follow the same reasoning as for \eqref{distupest},
exchanging the roles of $t_D$ and $\kappa_D$.  As before, we proceed by contradiction and reduce the situation to the local situation, considering convergent sequences $(z_k)$ and $(w_k)$ such that
$k_D (z_k;w_k) \ge T_D (z_k;w_k) + k  \omega_f( |z_k-w_k|^{1/m})$.

If $z_\infty=w_\infty \in D$, it is easy to find a contradiction from the fact that $ T_D (z_k;w_k) \le C k_D (z_k;w_k) \le C'|z_k-w_k|$ for $k$ large enough. If $z_\infty=w_\infty \in \overline{V} \cap \partial D$,
with $ V \subset \subset U \subset U_0$,
by Lemma \ref{locprelim2} there is a neighborhood
$V'$ of $z_\infty$ such that Theorem \ref{finsloc} \eqref{distlowest} holds on it, again a contradiction.

The most delicate case is that when
$z_k \to z_\infty \in \partial D$, $w_k \to w_\infty \in \partial D$, and $z_\infty \neq w_\infty$.
Choose neighborhoods $V'_z$, $V'_w$ as in Lemma \ref{locprelim2},
$k$ large enough so $z_k \in  V'_z$,   $w_k \in V'_w$; and choose $\sigma$ an
 $\eps$-extremal curve for $T_D$ joining  $z_k$ to $w_k$.
 Let $z'$ be the first exit point from $V'_z$, precisely
$z':= \sigma(t_1)$, with $t_1:= \inf\{ t: \sigma(t)\notin  V'_z\}$, so $z'\in\partial  V'_z$, and
$w'$ the last entry point into $ V'_w$. Then the $\eps$-extremal  property implies
\[
T_D (z_k,w_k) \ge T_D (z_k,z') + T_D (w',w_k) -2\eps,
\]
and the local property we just proved implies
\[
T_D (z_k,z') + T_D (w',w_k) \ge k_D (z_k,z') + k_D (w',w_k) -C \omega_f(C),
\]
where the upper bound in the integral depends on the diameters of $ V'_z$ and $ V'_w$.
Again using \eqref{pisalow}, and the fact that $|z_k-z'|$ and $|w_k-w'|$ are bounded from below,
\[
k_D (z_k,z') + k_D (w',w_k) \ge \frac12 \log \frac1{\delta_D(z_k)} + \frac12 \log \frac1{\delta_D(w_k)} -C,
\]
where once again the constant depends on  the choices made for $V'_z$, $V'_w$. Now
again by \cite{NPT1},
\begin{equation*}
k_D(z_k,w_k) 
\le \frac12 \log \frac{ 1}{\delta_D(z_k)} + \frac12 \log \frac{ 1}{\delta_D(w_k)} +C,
\end{equation*}
since $|z_k-w_k|\to |z_\infty-w_\infty|>0$. This implies that $ k_D (z_k,w_k)-T_D (z_k,w_k) \le C$ and
concludes the proof by contradiction in this case.

The last case is analogous but simpler.

\subsection{Proof of Theorem \ref{finsloc}, \eqref{uprat}  and \eqref{lowrat}.}

\begin{prop}
\label{localrat}
Under the hypothesis \eqref{metricupest} of Theorem \ref{finsloc}, for any $q\in U\cap \partial D$, there exists
a neighborhood $V_q\subset U$ of $q$ and $c>0$ such that for any $z,w\in V_q$,
\begin{equation}
\label{locuprat}
 \frac{T_D(z,w)}{k_D(z,w)} \le 1+ f\left( \delta_D(z)+ c |z-w|^{1/m}\right) ;
\end{equation}
and under the hypothesis \eqref{metriclowest},
for any $q\in U\cap \partial D$, there exists
a neighborhood $V_q\subset U$ of $q$ and $c>0$ such that for any $z,w\in V_q$,
\begin{equation}
\label{loclowrat}
 \frac{k_D(z,w)}{T_D(z,w)} \le 1+ f\left( \delta_D(z)+ c |z-w|^{1/m}\right).
\end{equation}
\end{prop}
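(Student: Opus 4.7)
The plan is to localize the relevant extremal curves to a Euclidean ball of radius $c|z-w|^{1/2m}$ around $z$, and then use the monotonicity of $f$ to freeze $f(\delta_D(\cdot))$ to $f(\delta_D(z)+c|z-w|^{1/2m})$ along the curve. For the upper ratio \eqref{locuprat}, I take an $\eps$-almost-geodesic $\sigma:[a,b]\to D$ for $k_D$ joining $z$ and $w$, whose existence is provided by \cite[Lemma 6.10]{BNT}. After shrinking $V$ if needed, Proposition \ref{locgeod} forces $|\sigma(s)-z|\le c|z-w|^{1/2m}$ for every $s$, so that $\delta_D(\sigma(s))\le \delta_D(z)+c|z-w|^{1/2m}$. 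Integrating \eqref{metricupest} along $\sigma$ and pulling the constant $1+f(\delta_D(z)+c|z-w|^{1/2m})$ out of the integral gives $T_D(z,w)\le \bigl(1+f(\delta_D(z)+c|z-w|^{1/2m})\bigr)(k_D(z,w)+\eps)$, and letting $\eps\to 0$ yields \eqref{locuprat}.

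For the lower ratio \eqref{loclowrat}, I first apply the same reduction as at the beginning of the proof of \eqref{distlowest}: replace $t_D$ by $t_D^0$ defined in \eqref{deftz}, so that $t_D\le \kappa_D$ and Lemma \ref{tbounds} is available. The key preliminary step is an analogue of Proposition \ref{locgeod} for $T_D$: for $z,w$ in a small enough neighborhood $V$ of $p$ and $\eps$ sufficiently small, any $\eps$-extremal curve for $T_D$ joining $z$ and $w$ remains in $B(z,c|z-w|^{1/2m})$. The proof is a verbatim transcription of the proof of Proposition \ref{locgeod}, with the lower bound \eqref{pisalow} for $k_D$ replaced by the corresponding factor in the lower estimate of Lemma \ref{tbounds}, and the upper bound \eqref{NAupest} replaced by the right-hand side of \eqref{tdest}; the role played by $k_D$ in the partitioning argument is now played by $T_D$ via \eqref{partlength}. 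Granted this localization, the same integration argument as above, using \eqref{metriclowest} in the form $\kappa_D\le (1+f(\delta_D(\cdot)))\,t_D$, delivers $k_D(z,w)\le \bigl(1+f(\delta_D(z)+c|z-w|^{1/2m})\bigr)T_D(z,w)$, and hence \eqref{loclowrat}.

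The main obstacle is the localization of $T_D$-extremals, but Lemma \ref{tbounds} is tailored precisely so that the upper and lower bounds for $T_D$ have the same logarithmic shape as those for $k_D$, which makes the adaptation of the proof of Proposition \ref{locgeod} essentially mechanical. Existence of $\eps$-extremal curves for $T_D$ is automatic from the definition of $T_D$ as an infimum over piecewise $\mathcal C^1$ curves, so no additional regularity of $\sigma$ is required beyond the bound encoded in \eqref{partlength}.
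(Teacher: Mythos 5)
Your proof is correct and follows essentially the same approach as the paper: localize the $\eps$-extremal curve using Proposition \ref{locgeod} (resp. its $T_D$-analogue obtained from Lemma \ref{tbounds} after the reduction to $t_D^0$), bound $\delta_D(\sigma(t))$ uniformly, pull the factor $1+f(\cdot)$ out of the integral, and let $\eps\to 0$. The paper states the $T_D$-localization step very tersely ("use the estimates in Lemma \ref{tbounds} to obtain the same control over the location of the $\eps$-extremals"), and you simply spell it out; the only cosmetic difference is that in the first part you invoke an $\eps$-almost-geodesic via \cite[Lemma 6.10]{BNT}, where a plain $\eps$-extremal curve (always available by definition of the infimum) already suffices, as you yourself note for the $T_D$ case.
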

\begin{proof}
To prove \eqref{locuprat}, choose $V_q$ small enough so that Proposition \ref{locgeod} applies.
Then for $\eps$ small enough and an $\eps$-extremal curve $\sigma$ for $k_D$ joining $z$ to $w$, we have for all $t$,
\[
\delta_D(\sigma(t)) \le \delta_D(z)+ C |z-w|^{1/m},
\]
therefore
$t_D(\sigma(t);\sigma'(t)) \le \left(1+ f\left( \delta_D(z)+ C |z-w|^{1/m} \right) \right) \kappa_D(\sigma(t);\sigma'(t))$.
Thus
\begin{multline*}
T_D(z,w) \le \left(1+ f\left( \delta_D(z)+ C |z-w|^{1/m} \right) \right) \int_a^b \kappa_D(\sigma(t);\sigma'(t)) dt
\\
\le \left(1+ f\left( \delta_D(z)+ C |z-w|^{1/m} \right) \right) \left( k_D(z,w) + \eps \right),
\end{multline*}
and we have the result letting $\eps$ go to $0$.

To prove \eqref{loclowrat}, we switch the roles of $T_D$ and $k_D$.
As explained before Lemma \ref{locprelim2}, the analogue of Proposition \ref{locgeod}
gives us
the same control over the location of the $\eps$-extremals for $T_D$ as we had previously from for $k_D$.
\end{proof}

We obtain the statements about any $V\subset \subset U$ from the Proposition as we had obtained the
corresponding statement for \eqref{distupest}: construct sequences of points $z_k, w_k$
such that
\[
\frac{T_D(z_k,w_k)}{k_D(z_k,w_k)} \ge 1+ f\left(  \delta_D(z)+ k |z-w|^{1/m}\right),
\]
and pass to convergent subsequences.
If $(z_k)$ and $(w_k)$ converge to distinct points inside $D$, it is easy to find a contradiction,
using the fact that $\lim_{x\to\infty}f(x)=\infty$; if one of the limits is in $\partial D$,
by \cite[Theorem 2.3]{FR}, $k_D(z_k,w_k)\to\infty$, and the fact that $T_D \le k_D +C$ is violated.  If the limits
coincide inside $D$, the infinitesimal hypothesis \eqref{metricupest} is violated near the limit
point, and if the common limit lies on $\partial D$, then the Proposition above is contradicted.
The same reasoning goes through for the reverse inequality.

\section{Proofs of Theorems \ref{berg}, \ref{lemp}, and \ref{lempcara}}
\label{lempert}

\subsection{Proof of Theorem \ref{berg}.}

We will follow the pattern of the proof of Theorem \ref{kobloc}:
we obtain  inequalities of comparison of infinitesimal metrics analogous to \eqref{kobupest}, which hold
locally; then using Lemma \ref{connhd}, we obtain an open set on which we can
apply Theorem \ref{finsloc} for the relevant metric.

A central juncture of the proof will be to use properties of the squeezing function on an appropriate
strictly pseudoconvex subdomain of $D$.  We choose $U_0$ a neighborhood of $p$ such that $D_{U_0}$
is strictly pseudoconvex, with $\mathcal C^{k,\eps}$-smooth boundary (the regularity given by the hypotheses).
We assume $V$, $U$ are open neighborhoods of $p$ as in the statement of the theorem.

\begin{lemma}
\label{manym}
For any $\nu, \xi$ chosen among the six pseudometrics
$ \kappa_D, \kappa_{D_{U_0}}, \kappa_{D_U}, \tilde \beta_{D}, \tilde \beta_{D_{U_0}}, \tilde \beta_{D_U}$,
and any $q\in (\partial D)\cap U$, there exists an open neighborhood of $q$, $V_q\subset U$ such
that $D_{V_q}$ is connected and for any $z\in D_{V_q}$, $X\in \C^n$,
\[
\left| \frac{\nu(z;X)}{\xi(z;X)} - 1\right| \le C \delta_D (z)^{(k-2+\eps)/2}.
\]
\end{lemma}

\begin{proof}
We start with $(\nu,\xi)= (\kappa_{D_{U_0}}, \tilde \beta_{D_{U_0}})$.

Recall that the squeezing function for a domain $G$, $z\mapsto \sigma_G(z)$, is a holomorphically invariant
function with values between $0$ and $1$ that measures
how much  $G$ looks like a ball when seen from $z$.
For a more precise definition and properties, see \cite{FW}, \cite{NTr}
and references therein.  The salient fact for us is (see e.g. \cite[p. 1361]{NTr})
\begin{equation}
\label{sqzratio}
\sigma_G(z)^{n+1} \le \frac{\tilde \beta_G(z;X)}{\kappa_G(z;X)} \le \sigma_G(z)^{-n-1} .
\end{equation}

\cite[Theorem 1]{NTr}, itself springing from \cite{FW},
implies that if $G$ is strictly pseudoconvex with boundary of class $\mathcal C^{k,\eps}$, $k\in \{2,3\}$,
$0<\eps\le 1$, then there exists $C>0$ such that
\begin{equation}
\label{sqzest}
1\ge \sigma_G(z) \ge 1 - C \delta_G(z)^{(k-2+\eps)/2}.
\end{equation}
We apply this to $ D_{U_0}$ and get
\begin{equation}
\label{sqzratio2}
1 - C \delta_{D_{U_0}}(z)^{(k-2+\eps)/2} \le \frac{\tilde \beta_{D_{U_0}}(z;X)}{\kappa_{D_{U_0}}(z;X)} \le 1 + C \delta_{D_{U_0}}(z)^{(k-2+\eps)/2}.
\end{equation}
Choosing $V_q$ small enough, we have $\delta_{D_{U_0}}(z) \le \delta_{D}(z)$ for $z\in D_{V_q}$. So the Lemma
is proved for that pair of metrics.

It is now enough to compare the various $\kappa_G$ and the various $\tilde \beta_G$
between themselves.

If $(\nu,\xi)= (\kappa_{D_{U_0}}, \kappa_{D})$ or $(\kappa_{D_{U}}, \kappa_{D_{U_0}})$, the
Lemma holds because of \eqref{kobupest}, since $1> (k-2+\eps)/2$, and thus it holds for
$(\nu,\xi)= (\kappa_{D_{U}}, \kappa_{D})$ too.

Since $q$ is a   strictly pseudoconvex point of $\partial D$, it follows from \cite[Lemma 3]{N2} that for any
neighborhood $U$ of $q$ such that $D\cap U$ is connected, there exists a neighborhood $W$ of $q$ such
that for all $z\in W$,
\begin{equation}
\label{betaloc}
\left| \frac{\tilde \beta_{D\cap U} (z;X)}{\tilde \beta_{D} (z;X)} -1 \right|
\le
C \delta_D (z) \log\frac1{\delta_D (z)}.
\end{equation}
So the Lemma holds for $\nu,\xi \in \left\{ \tilde \beta_{D}, \tilde \beta_{D_{U_0}}, \tilde \beta_{D_{U}}\right\}$.
\end{proof}

To finish the proof of Theorem \ref{berg}, we use Lemma \ref{connhd}
to obtain an open set $W_0$ such that $ V\subset \subset W_0 \subset \subset U$ and $D\cap W_0$
is connected. We take a finite covering of $\overline W_0 \cap \overline D$ by $V_{q_j}$, $q_j\in \overline W_0$,
 $1\le j \le N$,
where the $V_{q_j}$ are chosen as in Lemma \ref{manym}, and let $W_1:= \bigcup_{j=1}^N V_{q_j} \subset \subset U$.
Since $D\cap W_1  = (D\cap W_0) \cup \bigcup_{j=1}^N (D\cap V_{p_j})$, it is connected as well.
The hypotheses \eqref{metricupest} and \eqref{metriclowest} of
Theorem \ref{finsloc} apply with $f(s)=cs^{(k-2+\eps)/2}$ on $W_1$.

We can apply
Theorem \ref{finsloc} taking $t_D$ to be any metric in
$\left\{ \kappa_{D_{U}}, \kappa_{D_{U_0}}, \tilde \beta_{D}, \tilde \beta_{D_{U_0}}, \tilde \beta_{D_{U}}\right\}$,
with $U:=W_1$ and $f$ as above.
Then $\omega_f(s)=c's^{(k-2+\eps)/2}$.  In particular, when either $\nu=t_D $ or
$\xi=\kappa_D$, then \eqref{distupest} and \eqref{distlowest} imply the desired inequalities
about differences of distances in \eqref{kobberg} when, say, $v=k_D$. But then, since we can compare
any two distances with $k_D$, we can compare them between themselves, too.

Lastly, we need to obtain the statement in \eqref{kobberg} about ratios of distances.
This follows from applying the conclusions \eqref{uprat} and \eqref{lowrat} of Theorem \ref{finsloc}.
\vskip.3cm

\noindent
{\it Proof of Corollary \ref{cor14}.}
As a consequence of \cite[Proposition 2]{N2}, $| k_D(z,w)-b_D(z,w)| \le C$
on $D$.

Let us consider first the cases $k=2$ or $\eps<1$. If we had sequences of points
$(z_k)_k, (w_k)_k$ in $D$
so that
\begin{equation}
\label{contracoro}
k |z_k-w_k|^{(k-2+\eps)/4} \le | k_D(z,w)-b_D(z,w)|,
\end{equation}
then, passing to convergent subsequences,
 $(z_k)_k$ and $(w_k)_k$ would have to tend to the same point $z_\infty$. If $z_\infty \in D$,
this would contradict the fact that on a compactum $K$, $k_D(z,w), b_D(z,w) \le C_K \|z-w\|$.
So $z_\infty \in \partial D$, but then this contradicts Theorem \ref{finsloc} for the point $z_\infty$.

Similarly, suppose we had sequences $(z_k)_k, (w_k)_k$ in $D$
so that
\begin{equation}
\label{contracororat}
\left| \frac{k_D(z,w)}{b_D(z,w)} -1\right| \ge k \left( \delta_D(z)+|z-w|^{1/2}\right)^{(k-2+\eps)/2},
\end{equation}
Passing to convergent subsequences, we may assume $z_k \to z_\infty \in \overline D$ and
$w_k \to w_\infty \in \overline D$. If $z_\infty , w_\infty \in D$, then $z_\infty = w_\infty$.
But then $k_D(z,w), b_D(z,w) \asymp  \|z-w\|$ (on a compactum) and $\delta_D(z_\infty)>0$
lead to a contradiction.

If $\lim_k k_D(z_k,w_k)=\infty$, then $| k_D(z_k,w_k)-b_D(z_k,w_k)| \le C$ implies that
$\frac{k_D(z_k,w_k)}{b_D(z_k,w_k)}\to 1$.  But if $z_\infty \in D$ and $w_\infty \in \partial D$,
this will be the case, and so lead to another contradiction.

We may now assume $z_\infty, w_\infty \in \partial D$.  If $z_\infty \neq w_\infty$, then
again \cite[Theorem 2.3]{FR} implies that $\lim_k k_D(z_k,w_k)=\infty$, so we have another contradiction.
Finally $z_\infty= w_\infty \in \partial D$, and again this  contradicts Theorem \ref{finsloc}.

In the case $k=3$, $\eps=1$, we exploit the fact that since $D$ is strictly pseudoconvex, we can take $D_{U_0}=D$,
and we do not need to use the estimate \eqref{betaloc}. Furthermore Lemma \ref{manym} holds with $D_U=D$.
So we can apply directly Theorem \ref{finsloc} with $f(s)=cs$ and get the desired result.

\subsection{Proof of Theorem \ref{lemp}.}

\begin{proof}

It follows from \cite[Theorem 1]{NPT1} that if $D$ is bounded and
$\partial D$ is $\mathcal C^{1,\eps}$-smooth for some $\eps>0$, then there exists $C>0$ such that
\begin{equation}
\label{lup}
l_D(z,w) \le \frac12 \log \frac1{\delta_D(z)}+\frac12 \log \frac1{\delta_D(w)}+C.
\end{equation}
On the other hand, \cite[Theorem 1.6]{NTh} states that under the hypotheses of Theorem \ref{lemp},
\begin{equation}
\label{klow}
k_D(z,w) \ge \frac{m}2 \log \left( 1+C \frac{|w-z|}{\delta_D(z)^{1/m}} \right)
+ \frac{m}2 \log \left( 1+C \frac{|w-z|}{\delta_D(w)^{1/m}} \right).
\end{equation}
If \eqref{lkvd} failed, there would be two sequences $(z_k)_k, (w_k)_k$ in $D$
so that
\begin{equation}
\label{contrakd}
k |z_k-w_k|^{1/m} \le l_D(z_k,w_k) - k_D(z_k,w_k).
\end{equation}
Passing to subsequences, we may assume $z_k\to z_\infty \in \overline D$, $w_k\to w_\infty \in \overline D$.

If $z_\infty \neq  w_\infty$, then there is some $\eta>0$ such that for $k\ge N$,
$|z_k-w_k|\ge \eta$.
But \eqref{lup} and \eqref{klow} taken together imply
\[
l_D(z_k,w_k) - k_D(z_k,w_k) \le -2 \log (C|z_k-w_k|^{1/m}) \le -2 \log (C\eta),
\]
and this contradicts \eqref{contrakd}.

So we are reduced to the case $z_\infty =  w_\infty$. If $z_\infty \in D$, then
for $k$ large enough, $l_D(z_k,w_k) \le C |z_k-w_k|$, and we have a contradiction again.
We then assume $z_\infty \in \partial D$. Let $U_0$ be a neighborhood of $z_\infty$
where Theorem \ref{kobloc} applies. By Lemma \ref{locCcvx}, we can take
 neighborhoods $V \subset \subset U \subset U_0$ of $z_\infty$
such that $D_{U}$ is $\C$-convexifiable and $\mathcal C^m$ smooth.

Since it is at least $\mathcal C^2$-smooth,
 we can apply Jacquet's extension of Lempert's theorem \cite{Jac}
 to the $\C$-convexifiable domain $D_{U}$, so that
$l_{D_{U}}=k_{D_{U}}$, and Theorem \ref{kobloc} \eqref{kobloceq} yields,
for $k$ large enough so that $z_k, w_k \in V$,
\[
l_D(z,w)  \le l_{D_{U}}(z_k,w_k) = k_{D_{U}}(z_k,w_k) \le k_D(z_k,w_k) + C |z_k-w_k|^{1/m},
\]
which contradicts \eqref{contrakd}.

To prove \eqref{lkrat}, we proceed in a similar way: assume there are
 sequences $(z_k)_k, (w_k)_k$ in $D$ such that
\begin{equation}
\label{contrakr}
 1+ k \left( \delta_D(z_k)+|z_k-w_k|^{1/m}\right) \le \frac{l_D(z_k,w_k)}{k_D(z_k,w_k)} ,
\end{equation}
and pass to convergent subsequences.  The case $z_\infty \neq  w_\infty$ can be ruled out
because then, since $D$ is bounded, $k_D(z_k,w_k) \ge \eta >0$ for $k$ large enough,
and then \eqref{lup} and \eqref{klow} taken together again imply that the right hand
side of \eqref{contrakr} is bounded.

When $z_\infty =  w_\infty$, again one sees that $z_\infty \in D$ leads to a contradiction
because $l_D(z_k,w_k)$ and $k_D(z_k,w_k)$ are both comparable to $|z_k-w_k|$
 and $\delta_D(z_k)\ge \eta>0$, for $k$ large enough.  Finally, if $z_\infty \in \partial D$,
we pick neighborhoods $V \subset \subset U \subset U_0$
as above, but use the conclusion \eqref{kobrat} from Theorem \ref{kobloc} to get a final contradiction.

\end{proof}

\subsection{Proof of Theorem \ref{lempcara}.}

\begin{proof}
We follow the pattern of the proof of \cite[Proposition 1]{N2}. Assume the theorem fails.  Then we
may find sequences of points $(z_k)_k, (w_k)_k$, converging respectively to $p, q \in \overline D$, such that
$(l_D(z_k,w_k) - c_D(z_k,w_k))  |z_k-w_k|^{-1/2} \to \infty$.

For a strictly pseudoconvex domain $D$, by \cite[Proposition 1]{N2},
\begin{equation}
\label{caranik}
c_D(z,w) \le k_D(z,w) \le l_D(z,w) \le c_D(z,w) +C.
\end{equation}
So we must have $p= q$. If $p\in D$,
then for $k$ large enough, $0\le c_D(z_k,w_k) \le l_D(z_k,w_k) = O(|z_k-w_k|)$, which easily implies the result.

Assume now $p=q\in \partial D$.
Since $p$ is a strictly pseudoconvex boundary point, we may apply Fornaess' embedding theorem \cite[Proposition 1]{For}
 to obtain a holomorphic map $\Phi$ from a neighborhood of $\overline D$ to $\mathbb C^n$
  and a strictly convex domain $G \supset \Phi(D)$ such that, near $p$, $\Phi$ is biholomorphic and $\partial G = \partial D$.
Now we may choose a neighborhood $U$ of $p$ such that $\Phi$ is biholomorphic on
a neighborhood of $\overline U$ and $G' := \Phi(D_U)= G_V$ is a strictly convex domain
and $V$ is a neighborhood of $\Phi(p)$. For
$k$ large enough so that $z_k, w_k \in D_U$,
 let $z_k' =\Phi(z_k)$ and $w_k' =\Phi(w_k)$. Note that, since $\Phi$ is biholomorphic on
a neighborhood of $\overline U$, there is a uniform constant $C>0$ such that $C^{-1} |z_k-w_k| \le |z'_k-w'_k| \le C |z_k-w_k|$.
Then
\begin{multline}
\label{chainbed}
l_D(z_k,w_k) - c_D(z_k,w_k) \le
l_{D_U}(z_k,w_k) - c_D(z_k,w_k) = l_{G'}(z'_k,w'_k) - c_{\Phi(D)}(z'_k,w'_k) \\
\le l_{G'}(z'_k,w'_k) - c_{G}(z'_k,w'_k) =  k_{G'}(z'_k,w'_k) - k_{G}(z'_k,w'_k) ,
\end{multline}
by applying Lempert's theorem to $G$ and $G'$. So we would get
\newline
$(k_{G'}(z'_k,w'_k) - k_{G}(z'_k,w'_k)) |z'_k-w'_k|^{-1/2}\to \infty$,
a contradiction to \eqref{kobloceq} in the case $m=2$.

We adapt the previous proof to obtain \eqref{lcrat}.  Assume now we have sequences of points $(z_k)_k, (w_k)_k$, converging respectively to $p, q \in \overline D$, such that
\[
\frac{ \frac{l_D(z_k,w_k)}{c_D(z_k,w_k)}-1 }{\delta_D(z_k)+ |z_k-w_k|^{1/2} }\to \infty.
\]
If $p\neq q$, the denominator remains bounded below, and the numerator is bounded, by \eqref{caranik}, so
we have a contradiction. If $p=q \in D$, then because there are constants so that $c|z-w|\le c_D(z,w) \le l_D(z,w) \le C|z-w|$,
we have a contradiction again. So the only remaining case is when $z_k, w_k \to p\in \partial D$, and we follow
the construction above, replacing \eqref{chainbed} by:
\begin{multline*}
\frac{l_D(z_k,w_k)}{c_D(z_k,w_k) } \le
\frac{l_{D_U}(z_k,w_k)}{c_D(z_k,w_k)} = \frac{l_{G'}(z'_k,w'_k)}{c_{\Phi(D)}(z'_k,w'_k)}
\le \frac{l_{G'}(z'_k,w'_k)}{c_{G}(z'_k,w'_k)}
=  \frac{k_{G'}(z'_k,w'_k)}{k_{G}(z'_k,w'_k)}
 \\ \le 1+C' ( \delta_G(z'_k)+|z'_k-w'_k|^{1/2})
\le 1+C ( \delta_D(z_k)+|z_k-w_k|^{1/2}),
\end{multline*}
by Theorem \ref{kobloc} \eqref{kobrat}, a contradiction again.
\end{proof}
Observe that this proof circumvents the use of the result of Balogh and Bonk \cite{BB},
since the proof of \cite[Proposition 1]{N2} only used \cite{BB} to deal with the case when $z$ and $w$
were close to the same boundary point $p$, and this case is now settled by Theorem \ref{kobloc}.

\section{The planar case}
\label{plane}

Let $D\subset \mathbb C$ be a planar domain; 
we write $D_r=D\cap(r\D)$ $(r>0$).
Denote by $\Pi$ the upper half plane. We write $m_D=\tanh k_D$, in particular $m_\Pi(z,w)=\left| \frac{z-w}{z-\bar w}\right|$.

Recall that $k_D=l_D$ (see e.g. \cite[Remark 3.3.8.(e)]{JP}).

\begin{proposition}\label{uhp}
For $z,w \in \Pi_1=\Pi\cap \D$, $k_{\Pi_1}(z,w)-k_\Pi(z,w) = T_1(z,w)+T_2(z,w)$, where
\begin{equation}\label{t1}
T_1(z,w)= \log \left( 1+ |z-w| \frac{\Im z \Im w}{|z- w|+|z-\bar w|} \frac{4}{(|1-z w| + |1-z \bar w|)|1-z \bar w|} \right),
\end{equation}
\begin{equation}\label{t2}
\mbox{and } T_2(z,w)= -\frac12 \log \left(1- \frac{|z-w|^2}{|1-z\bar w|^2} \right).
\end{equation}
When $z,w \to 0$
\begin{equation}
\label{tla1}
T_1(z,w)= 2 |z-w| \frac{\Im z \Im w}{|z- w|+|z-\bar w| }  (1+ o (1)),
\end{equation}
\begin{equation}
\label{tla2}
T_2(z,w)= \frac12 |z-w|^2 (1+ o (1)).
\end{equation}

As a consequence of \eqref{t1} and \eqref{t2}, for any $\e>0$ there exist $r\in(0,1)$ such that
\begin{equation}
\label{mdl}
0\le k_{\Pi_1}(z,w)-k_\Pi(z,w)<(1+\e)|z-w|(\frac12 |z-w|+\min(\Im z, \Im w))
\end{equation}
when $z,w\in\Pi_r.$ When $\Re z=\Re w$, this  estimate is sharp:
\newline
$k_{\Pi_1}(z,w)-k_\Pi(z,w)\sim |z-w|(\frac12 |z-w|+\min(\Im z, \Im w))$.
\end{proposition}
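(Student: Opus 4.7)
\textbf{Plan of proof for Proposition \ref{uhp}.}

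The plan is to reduce everything to an explicit conformal map. The domain $\Pi_1$ is a biholomorphic to $\Pi$ via
\[
F(z) = \left( \frac{1+z}{1-z}\right)^2,
\]
which sends $\Pi_1$ to the first quadrant through $z\mapsto (1+z)/(1-z)$ and then squares this into $\Pi$. A straightforward computation using the identities
\[
F(z) - F(w) = \frac{4(z-w)(1-zw)}{(1-z)^2(1-w)^2}, \qquad F(z) - \overline{F(w)} = \frac{4(z-\bar w)(1-z\bar w)}{(1-z)^2(1-\bar w)^2}
\]
and the equality $|1-w| = |1-\bar w|$ will give, after cancellation,
\[
m_{\Pi_1}(z,w) = \tanh k_{\Pi_1}(z,w) = \frac{|z-w|\,|1-zw|}{|z-\bar w|\,|1-z\bar w|}.
\]

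Next I would write
\[
k_{\Pi_1}(z,w) - k_\Pi(z,w) = \tfrac12\log\frac{1+m_{\Pi_1}}{1+m_\Pi} + \tfrac12\log\frac{1-m_\Pi}{1-m_{\Pi_1}}
\]
and regroup as $k_{\Pi_1}-k_\Pi = \log\frac{1+m_{\Pi_1}}{1+m_\Pi} + \tfrac12\log\frac{1-m_\Pi^2}{1-m_{\Pi_1}^2}$. For the first term, the quotient $(1+m_{\Pi_1})/(1+m_\Pi)$ can be brought to the form $1+X$ with
\[
X = \frac{|z-w|(|1-zw|-|1-z\bar w|)}{|1-z\bar w|(|z-\bar w|+|z-w|)};
\]
using the key algebraic identity $|1-zw|^2-|1-z\bar w|^2 = 4\Im z\Im w$ (an elementary expansion) rewrites $|1-zw|-|1-z\bar w|$ as $4\Im z\Im w/(|1-zw|+|1-z\bar w|)$ and yields exactly the logarithm \eqref{t1}. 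For the second term, one has $1-m_\Pi^2 = 4\Im z\Im w/|z-\bar w|^2$, and the analogous expansion
\[
|z-\bar w|^2|1-z\bar w|^2 - |z-w|^2|1-zw|^2 = 4\Im z\Im w\bigl(|1-z\bar w|^2 - |z-w|^2\bigr)
\]
(proved by writing both sides as polynomials in $z,\bar z,w,\bar w$) gives $1-m_{\Pi_1}^2 = 4\Im z\Im w(|1-z\bar w|^2-|z-w|^2)/(|z-\bar w|^2|1-z\bar w|^2)$. The ratio collapses to $|1-z\bar w|^2/(|1-z\bar w|^2-|z-w|^2)$, matching \eqref{t2}.

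The asymptotics \eqref{tla1} and \eqref{tla2} follow by replacing $|1-zw|$ and $|1-z\bar w|$ by $1+o(1)$ and using $\log(1+u) = u(1+o(1))$. For the bound \eqref{mdl}, the triangle inequality gives $|z-w|+|z-\bar w| \ge |(z-\bar w)-(z-w)| = 2\Im w$ and symmetrically $\ge 2\Im z$, so $|z-w|+|z-\bar w| \ge 2\max(\Im z,\Im w)$, whence
\[
\frac{2\Im z\Im w}{|z-w|+|z-\bar w|} \le \min(\Im z,\Im w).
\]
Combining with the asymptotics of $T_1,T_2$, for any $\varepsilon>0$ the upper estimate in \eqref{mdl} holds on $\Pi_r$ for $r$ small enough. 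Sharpness when $\Re z = \Re w$ is immediate from $|z-w|+|z-\bar w| = |\Im z-\Im w| + (\Im z+\Im w) = 2\max(\Im z,\Im w)$, turning the above inequality into equality.

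The only mildly subtle step is the algebraic identification of $T_1$ with $\log((1+m_{\Pi_1})/(1+m_\Pi))$: it rests on the two expansion identities above. Once these are established, everything else is direct substitution and elementary asymptotic analysis.
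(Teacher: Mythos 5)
Your proof is correct and takes essentially the same route as the paper: same conformal map $F(z)=\bigl((1+z)/(1-z)\bigr)^2$, same decomposition $k_{\Pi_1}-k_\Pi=\log\frac{1+m_{\Pi_1}}{1+m_\Pi}+\tfrac12\log\frac{1-m_\Pi^2}{1-m_{\Pi_1}^2}$, same key identity $|1-zw|^2-|1-z\bar w|^2=4\Im z\,\Im w$, and the same triangle-inequality step $|z-w|+|z-\bar w|\ge 2\max(\Im z,\Im w)$ for \eqref{mdl} and its sharpness. The only cosmetic difference is in obtaining \eqref{t2}: the paper factors $1-m_{\Pi_1}^2=(1-m_\Pi^2)\,\frac{(1-|z|^2)(1-|w|^2)}{|1-z\bar w|^2}$ via $\Im F(z)$, whereas you verify the equivalent polynomial identity $|z-\bar w|^2|1-z\bar w|^2-|z-w|^2|1-zw|^2=4\Im z\,\Im w\,\bigl(|1-z\bar w|^2-|z-w|^2\bigr)$ directly; both are reconciled by $|1-z\bar w|^2-|z-w|^2=(1-|z|^2)(1-|w|^2)$.
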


\begin{proof} Since the map  $f(z):=\left(\frac{z+1}{z-1}\right)^2$ transforms
conformally $\Pi_1$ onto $\Pi,$ $m_{\Pi_1}(z,w)= m_\Pi(f(z),f(w))$. Notice also that
\begin{equation}
\label{1mm}
1-m_\Pi(z,w)^2 = \frac{4 \Im z \Im w}{|1-z\bar w|^2}.
\end{equation}
Inverting the hyperbolic
tangent function,
\[
k_D(z,w)= \log (1+m_D)- \frac12 \log (1-m_D(z,w)^2), \mbox{ thus }
\]
\[
k_{\Pi_1}(z,w)-k_\Pi(z,w) = \log \frac{1+m_{\Pi_1}(z,w))}{1+m_\Pi(z,w))}
- \frac12 \log \frac{1-m_{\Pi_1}(z,w)^2}{1-m_\Pi(z,w)^2}.
\]
Elementary computations and the fact that $\overline{f(z)}= f(\bar z)$ show that
$$
f(z)-f(w)= 4 \frac{(1-zw)(z-w)}{(1-z)^2(1-w)^2}, \mbox{ and }
\Im f(z)= 4 \frac{(1-|z|^2) \Im z}{|1-z|^4}.
$$
We deduce that
\begin{multline*}
m_{\Pi_1}(z,w) = \left| \frac{1-zw}{1-z\bar w}\right| m_{\Pi}(z,w)=: \mu_1 m_{\Pi}(z,w) ,
\mbox{ and } \\
1-m_{\Pi_1}(z,w)^2= \left( 1-m_\Pi(z,w)^2\right) \frac{(1-|z|^2) (1-|w|^2)}{|1-z\bar w|^2}.
\end{multline*}
We then write $\frac{1+\mu_1 m_\Pi(z,w)}{1+ m_\Pi(z,w)}= 1 + (\mu_1-1)\frac{ m_\Pi(z,w)}{1+ m_\Pi(z,w)}$,
and
$$
\mu_1-1 = \frac{\mu_1^2-1}{\mu_1+1}=  \frac{1}{\mu_1+1} \frac{4 \Im z \Im w}{|1-z\bar w|^2}.
$$
Elementary computations then lead to \eqref{t1} and \eqref{t2}.

Since $|1-z\bar w|\to 1$ as $z,w\to0$, the asymptotic expansions in \eqref{tla1}, \eqref{tla2}
follow readily.

To get \eqref{mdl}, notice that
\[
|z- w|+|z-\bar w|\ge |\Im z - \Im w|+|\Im z + \Im w|=2 \max (\Im z , \Im w).
\]
This becomes an equality when $\Re z=\Re w$.
\end{proof}

\noindent{\bf Remark.}  None of the two additive terms in the estimate \eqref{mdl} can be removed.
This can be seen by choosing points $z,w \in i\R_+$ such that either $|z-w|$ or $\min (\Im z , \Im w)$ is  dominating.

\begin{proof*}{\it Proof of Theorem \ref{dimone}.}
We may choose $U_0$ such that
$D_{U_0}$ is a bounded Dini-smooth domain.

Assume the proposition fails.
Then we find sequences $z_k\to p'\in\overline{D_V}$ and
$w_k\to p''\in\overline{D_V}$  such that
$$\frac{k_{D_U}(z_k,w_k)-c_D(z_k,w_k)}{|z_k-w_k|(|z_k-w_k|+\d_D(z_k)^{1/2}\d_D(w_k)^{1/2})}
\to\infty.$$

We distinguish the same three cases as in the proof of Theorem \ref{lempcara}.

When $p'\neq p'',$ we get a contradiction to \cite[Propositions 5 and 7, Corollary 6]{N1} which
imply that there exists a constant $c>0$ such that if $z$ is near $p'$ and $w$ is near $p'',$ then
$$2k_{D_U}(z,w)-c<-\log\d_D(z)-\log\d_D(w)<2c_D(z,w)+c.$$

If $p'=p''\in D_V,$ then we use the fact that  $k_{D_U}(z,w)\le c|z-w|$ for $z,w$ near $p.$

Let now $p'=p''\in\partial D.$ We shall use arguments from the proof of \cite[Proposition 6]{NTA}.
We may find a Dini-smooth Jordan curve $\zeta$ such that
$\zeta$ coincides with $\partial D$ near $p'$ and $D\subset\zeta_{\mbox{ext}}.$
Since $V\subset \subset U$ and $p'\in \bar V$, we can fix some $r_0>0$ depending only on $U$ and $V$ such that
$D(p',r_0)\subset U$ and $D(p',r_0)\cap \partial D = D(p',r_0)\cap \zeta$.

Let $a\in\zeta_{\mbox{int}},$
$\ds\varphi(z)=\frac{1}{z-a}$ and $G=\varphi(\zeta_{\mbox{ext}})\cup\{0\}.$
Let $\psi:G\to\D$ be a Riemann map. It extends to a $\mathcal C^1$-diffeomorphism
from $\overline G$ to $\overline \D$ (cf. \cite[Theorem 3.5]{Pom}).
Let $\theta$ map conformally $\D$ onto $\Pi,$ chosen so that $\theta \left( \psi \circ \varphi(p') \right)=0$.

Set $\eta=\theta\circ\psi\circ\varphi$; for some $r>0$ (depending on $\eta$ and $r_0$), $D_r \subset \eta (D_U)$.
Writing $\z=\eta(z)$, we  then have
$$c_D(z_k,w_k)\ge c_\Pi(\z_k,\w_k)=k_\Pi(\z_k,\w_k),\quad k_{D_U}(z_k,w_k)\le k_{\Pi_r}(\z_k,\w_k)$$
for $k$ large enough.

Since $c_\Pi=k_\Pi,$ $\ds\lim_{z\to a}\frac{\d_\Pi(\z)}{\d_D(z)}=|\eta'(a)|,$
and $\ds\frac{\z_k-\w_k}{z_k-w_k}\to\eta'(a),$ then using Proposition \ref{uhp}
$$
0\le k_{D_U}(z_k,w_k)-c_D(z,w)\le k_{\Pi_r}(\z_k,\w_k)-
k_{\Pi}(\z_k,\w_k)
$$
$$
\le C_r |\z_k-\w_k|(|\z_k-\w_k|+\d_D(\z_k)^{1/2}\d_D(\w_k)^{1/2})$$
$$
\le C|z_k-w_k|(|z_k-w_k|+\d_D(z_k)^{1/2}\d_D(w_k)^{1/2})
$$
for some $C>0$ and any $k$ large enough. We reach again a contradiction
which completes the proof.
\end{proof*}

\begin{corollary}\label{ck} Let $D$ be a bounded Dini-smooth planar domain $D.$
Then there exists a constant $C>0$ such that
$$
0\le k_D(z,w)-c_D(z,w)\le C|z-w|(|z-w|+ \d_D(z)^{1/2}\d_D(w)^{1/2}),\quad z,w\in D.
$$
\end{corollary}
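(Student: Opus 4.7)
The left-hand inequality $0\le k_D-c_D$ is immediate: any $f\in\mathcal O(D,\D)$ with $f(z)=0$ gives $c_D$, whereas $k_D$ is computed by $f\in\mathcal O(\D,D)$, and the classical Schwarz-Pick estimate yields $c_D\le k_D$.

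For the right-hand inequality the plan is to localize near $\partial D$, in the same spirit as the sketch of Corollary \ref{cor14}. For every $p\in\partial D$, Proposition \ref{lc} produces neighborhoods $V_p\subset\subset U_p$ of $p$ and a constant $C_p>0$ such that
\[
k_{D_{U_p}}(z,w)-c_D(z,w)\le C_p\,|z-w|\bigl(|z-w|+\delta_D(z)^{1/2}\delta_D(w)^{1/2}\bigr),\qquad z,w\in D_{V_p}.
\]
By compactness of $\partial D$, finitely many $V_{p_1},\dots,V_{p_N}$ suffice; let $\eta>0$ be a Lebesgue number of this covering.

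Now split the problem into cases. \textbf{(a)} If $\delta_D(z)<\eta/4$ and $|z-w|<\eta/4$ (or symmetrically in $w$), pick $q\in\partial D$ with $|z-q|=\delta_D(z)$; then the open ball $B(q,\eta/2)$ lies in some $V_{p_j}$, so $z,w\in D_{V_{p_j}}$, and since $k_D\le k_{D_{U_{p_j}}}$ the local estimate transfers to $k_D-c_D$. \textbf{(b)} If both points lie in the compact set $K=\{\zeta\in D:\delta_D(\zeta)\ge\eta/8\}$, then $k_D(z,w)\le C_K|z-w|$ (integrating the Kobayashi-Royden metric, which is bounded on $K$), while $\delta_D(z)^{1/2}\delta_D(w)^{1/2}\ge\eta/8$, and the inequality is trivial. \textbf{(c)} In the remaining regime one has $|z-w|\ge c\eta$ (the only sub-case not covered by (a) or (b) forces, via $|z-w|\ge|\delta_D(z)-\delta_D(w)|$, a lower bound of order $\eta$ on $|z-w|$); here the right-hand side is bounded below by a positive constant, so any uniform bound $k_D-c_D\le M$ on $D$ suffices.

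The main obstacle is therefore the global boundedness $k_D(z,w)-c_D(z,w)\le M$ on all of $D$. I would establish it by a compactness/contradiction argument along the lines of the proof of Proposition \ref{lc}: supposing failure, one finds sequences $z_k\to p'$, $w_k\to p''$ in $\overline D$. If $p'\neq p''\in\partial D$, the sharp boundary asymptotics of \cite{N1} (already invoked in the proof of Proposition \ref{lc}) give $2k_D(z_k,w_k)+O(1)\le-\log\delta_D(z_k)-\log\delta_D(w_k)\le 2c_D(z_k,w_k)+O(1)$, so their difference stays bounded. If $p'=p''\in\partial D$, Proposition \ref{lc} itself already yields $k_D-c_D\le k_{D_U}-c_D=o(1)$. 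If $p'=p''\in D$, the difference is $O(|z_k-w_k|)=o(1)$. Any remaining mixed case reduces to one of these by the triangle inequality. With this uniform bound available, the three cases above combine and, after adjusting $C$, deliver the claimed estimate for all $z,w\in D$.
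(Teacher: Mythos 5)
Your overall strategy matches the paper's: cover $\partial D$ by finitely many charts where Proposition~\ref{lc} applies (note that $k_D\le k_{D_U}$ transfers the estimate to $k_D-c_D$), use the local Lipschitz bound $k_D(z,w)\le C_p|z-w|$ near interior points, and handle well-separated points via the global boundedness of $k_D-c_D$. The one place you diverge is that the paper simply cites \cite[Proposition 8]{N1} for the global bound $k_D-c_D\le M$, whereas you attempt to re-derive it by a compactness argument; that re-derivation has a gap. Your sentence ``any remaining mixed case reduces to one of these by the triangle inequality'' does not actually dispose of the case $p'\in D$, $p''\in\partial D$: replacing $z_k$ by a fixed interior point $a$ via the triangle inequality only trades the problem for bounding $k_D(a,w_k)-c_D(a,w_k)$ with $w_k\to\partial D$, which is still the same ``one interior, one boundary'' situation. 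One would instead need to know that the asymptotics $2k_D(z,w)+O(1)\le-\log\delta_D(z)-\log\delta_D(w)\le 2c_D(z,w)+O(1)$ from \cite{N1} remain valid when one argument stays in a compact subset of $D$, which is plausible but not what you wrote. Since the global bound is exactly \cite[Proposition 8]{N1}, the cleanest fix is simply to invoke it as the paper does, after which the rest of your argument is sound.
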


\begin{proof} Again we follow the same proof pattern:

(i) By \cite[Proposition 8]{N1}, the difference $k_D-c_D$ is bounded on
$D\times D;$

(ii) For any $p\in D,$  there is a constant $C_p>0$ such that for $z,w$ near $p$,
$k_D(z,w)\le C_p|z-w|$;

(iii) Theorem \ref{dimone} applied to any boundary point and replacing
$k_{D_U}$ by $k_D.$
\end{proof}

{}

\end{document}